\documentclass[11pt]{amsart}

\usepackage{amssymb}
\usepackage[T1]{fontenc} 
\usepackage{wasysym} 
\usepackage{url}
\usepackage{relsize} 

\usepackage{color}

\usepackage[all,2cell,ps,cmtip]{xy}
\newcommand{\piccirc}{\protect{\text{\fontsize{14pt}{12pt}\selectfont$\circ$}}}

\theoremstyle{plain}
\newtheorem{theorem}{Theorem}[section]
\newtheorem{proposition}[theorem]{Proposition}
\newtheorem{lemma}[theorem]{Lemma}
\newtheorem{corollary}[theorem]{Corollary}
\newtheorem{fact}[theorem]{Fact}

\newtheorem*{B-E-theorem}{Blok-Esakia Theorem}
\newtheorem*{B-E-theorem-ex}{Blok-Esakia Theorem extended version}
\newtheorem*{B-E-theorem-res}{Blok-Esakia Theorem restricted versions}
\newtheorem*{B-lemma}{Blok Lemma}
\newtheorem*{B-lemma-fc}{Blok Lemma: finite case}

\theoremstyle{definition}

\newcommand{\0}{\emptyset}
\renewcommand{\=}{\approx}
\renewcommand{\leq}{\leqslant}
\renewcommand{\geq}{\geqslant}
\newcommand{\meet}{\wedge}
\newcommand{\bigmeet}{\bigwedge}
\newcommand{\join}{\vee}
\newcommand{\bigjoin}{\bigvee}


\newcommand{\mmeet}{\wedge\mbox{}\kern-10pt\wedge}
\newcommand{\mbigmeet}{\bigwedge\mbox{}\kern-10pt\bigwedge}
\newcommand{\mjoin}{\vee\mbox{}\kern-10pt\vee}
\newcommand{\mbigjoin}{\bigvee\mbox{}\kern-10pt\bigvee}
\newcommand{\mneg}{\text{\raisebox{-.5pt}{$\neg$}\mbox{}\kern-6.7pt\raisebox{1pt}{$\neg$}}}

\newcommand{\mBox}{\Box}

\newcommand{\Mod}{\operatorname{Mod}}

\newcommand{\Thel}{{ \sf Th^{el}}}
\newcommand{\diagplus}{\sf{diag^+}}

\newcommand{\Hey}{\mathcal H}
\newcommand{\Grz}{\mathcal G}

\newcommand{\V}{\mathcal V}
\newcommand{\W}{\mathcal W}
\newcommand{\U}{\mathcal U}
\newcommand{\K}{\mathcal K}
\newcommand{\Q}{\mathcal Q}

\newcommand{\M}{\mathbf{M}}
\newcommand{\N}{\mathbf{N}}
\newcommand{\A}{\mathbf{A}}
\newcommand{\B}{\mathbf{B}}
\newcommand{\C}{\mathbf C}

\renewcommand{\S}{\mathbf S}
\newcommand{\Fr}{\mathbf{F}}

\renewcommand{\H}{\mathbf{H}}
\newcommand{\Nat}{\mathbb N}

\renewcommand{\phi}{\varphi}

\begin{document}

\title{On the Blok-Esakia theorem for universal classes}

\author{Micha\l~M. Stronkowski}

\address{Faculty of Mathematics and Information Sciences,
Warsaw University of Technology, ul. Koszykowa 75, 00-662
Warsaw, Poland}
\email{m.stronkowski@mini.pw.edu.pl}

\keywords{Blok-Esakia theorem, multi-conclusion consequence relations, universal classes, Heyting algebras, Grzegorczyk algebras, structural completeness, deduction theorem}

\subjclass[2010]{03B45,03G27,08C15, 03B22}

\thanks{The work was supported by the Polish National Science Centre grant no. DEC- 2011/01/D/ST1/06136.}

\begin{abstract}
The Blok-Esakia theorem states that there is an isomorphism from the lattice of intermediate logics onto the lattice of normal extensions of Grzegorczyk modal logic. The extension for multi-conclusion consequence relations was obtained by Emil Je\v r\'abek as an application of canonical rules. We show that Je\v r\'abek's result follows already from Blok's algebraic proof. 


We also prove that the properties of strong structural completeness and strong universal completeness are preserved and reflects by the aforementioned isomorphism. These properties coincide with structural completeness and universal completeness respectively for single-conclusion consequence relations and, in particular, for logics. 
\end{abstract}

\maketitle

\section{Introduction}
Let us consider the modal formula
\[ 
grz=\mBox(\mBox(p\to \mBox p)\to p)\to p.
\]
Let GRZ be the normal extension of S4 modal logic axiomatized by $grz$ \cite{Grz67}. Let INT be the intuitionistic logic.

The celebrated Blok-Esakia theorem states that there is an isomorphism from the lattice of extensions of INT onto the lattice of normal extensions of GRZ \cite{Blo76,Esa76}. Algebraically, it means that there is an isomorphism from the lattice of varieties of Heyting algebras onto the lattice of varieties of modal Grzegorczyk algebras. (The reader may find a more detailed history of {\it classical} interpretations of intuitionistic logic in  papers \cite{CZ92,Mur06,WZ14}.)

Nowadays we have various proofs of this result. The one of Blok is purely algebraic. Esakia used Esakia spaces, i.e., certain topological relational structures. There is also akharyashchev's proof based on his canonical formulas \cite{CZ97,Zak84}.

There are many extensions of the Blok-Esakia theorem, see a recent survey \cite{WZ14} and the references therein. Let us recall one of them. In \cite{Jer09} Je\v r\'abek extended the Blok-Esakia theorem to multi-conclusion consequence relations (called there rule systems). It means that he allows not only axiomatic extensions, but also extensions obtained by adding new inference rules (with possibly many conclusions). Algebraically Je\v r\'abek's result says that there is an isomorphism from the lattice of universal classes of Heyting algebras onto  the lattice of universal classes of modal Grzegorczyk algebras. Je\v r\'abek's proof is based on canonical rules, a tool which extents Zakharyashchev's canonical formulas.
The starting point of this project was an observation that actually Je\v r\'abek's result follows directly from Blok's algebraic proof.

There is a quite developed algebraic theory for single-conclusion consequence relations, see e.g. \cite{Fon16,FJP03,Cze01}. However multi-conclusion consequence relations, despite being known for decades \cite{SS08},  seems to be neglected. They appear naturally in proof theory in the context of sequent calculi \cite{NP01}. But there they are used mainly as a handy tool for describing logics and are not object of studies per se. The situation has been changed with Je\v r\'abek's paper \cite{Jer09} and his observation that multi-conclusion inference rules may be used for the canonical axiomatization of intermediate and modal logics. This topic was recently undertaken in many papers \cite{BBI16,BBIl16,BBIl16a,BGGJ16,BG14,Cit15,Cit15a,Cit16,Cit16a,Gou15,Iem15,Iem14b,Jer15}.  
\vspace{.7em}

The paper contains a full presentation of an algebraic proof of Je\v r\'abek's result. We simplify one point  of Blok's reasoning. Namely, we provide a short proof that free Boolean extensions of Heyting algebras are Grzegorczyk algebras (this is mainly done in Section \ref{sec:: Grzegorczyk algebras}). Along the line, we also obtain a new proof of Blok's  characterization of Grzegorczyk algebras.
Still, the main technical ingredient is the Blok lemma which connects free Boolean extensions with Grzegorczyk algebras (its full proof is given in the appendix). We failed to find any essential simplification of its tricky proof.

We also undertake the problem of preservation and  reflection  of some properties by the Blok-Esakia isomorphisms. For logics the topic was thoroughly investigated in the past and summarized in the survey \cite{CZ92}. 
 We verify  preservation and  reflection of strong structural completeness and strong universal completeness. For single-conclusion consequence relations and logics they are equivalent to structural completeness and universal completeness respectively. For logics this fact was proved by Rybakov \cite{Ryb97}. (We used it in \cite{DS16} in order to construct a normal extension of S4 modal logic which is almost structurally complete but does not have projective unification.) 


\section{Multi-conclusion consequence relations}

Let us fix a language $\mathcal L$ (i.e., an infinite set of variables and a set of symbols of operations with ascribed arrives). Let $\bf Form$ be the algebra of all formulas (terms) in $\mathcal L$. An \emph{inference rule} (in $\mathcal L$) is an ordered pair, written as $\Gamma/\Delta$, of finite subsets of $Form$.  A set of inference rules, written as a relation $\vdash$, is a \emph{multi-conclusion consequence relation}, \emph{mcr} in short (in \cite{BG14,Jer09} it is called a \emph{rule system}) if  for every finite subsets $\Gamma,\Gamma',\Delta,\Delta'$ of $Form$, for every $\varphi\in Form$ and for every substitution (i.e., an automorphism of $\bf Form$) $\sigma$ the following conditions are satisfied
\begin{itemize}
\item $\{\varphi\}\vdash\{\varphi\}$;
\item if $\Gamma\vdash\Delta$, then $\Gamma\cup\Gamma'\vdash\Delta\cup\Delta'$;
\item if $\Gamma\vdash\Delta\cup\{\varphi\}$ and $\Gamma\cup\{\varphi\}\vdash\Delta$, then $\Gamma\vdash\Delta$;
\item if $\Gamma\vdash\Delta$, then $\sigma(\Gamma)\vdash\sigma(\Delta)$.
\end{itemize}

In this paper we are interested only in intermediate and modal mcrs. Let $K$ denotes modal logic $K$ and $INT$ denotes the intuitionistic logic, both interpreted as  set of formulas. Then an \emph{intermediate mcr} is an mcr $\vdash$ in the language of $INT$ such that $\0\vdash\{\varphi\}$ for every $\varphi\in INT$ and $\{p,p\to,q\}\vdash q$. And a \emph{modal mcr} is an mcr $\vdash$ in the language of $K$ such that $\0\vdash\{\varphi\}$ for every $\varphi\in K$, $\{p,p\to,q\}\vdash q$ and $\{p\}\vdash\{\mBox p\}$. A (general, intermediate or modal) mcr is \emph{axiomatized} by a set $R$ of inference rules if it is a least (general, intermediate or modal receptively) mcr containing $R$.

Intermediate and modal mcrs have algebraic semantics. Let us recall that a \emph{Heyting algebra} is a bounded lattice endowed with the binary operation $\to$ such that $a\meet b\leq c$ iff $a\leq b\to c$ for every triple $a,b,c$ of its elements. It appears that the class of all Heyting algebras form a variety which constitutes a semantics for $INT$. A \emph{modal algebra} is a Boolean algebra endowed with additional unary operation $\mBox$  such that for all its elements $a,b$ we have $\mBox(a\meet b)=\mBox a\meet \mBox b$ and $\mBox 1=1$.
The variety of modal algebras gives a semantics for $K$.

We say that a class $\U$ of algebras is universal iff it is axiomatizable by first order sentences of the form
\begin{equation*}
(\forall \bar x) [s_1\=s'_1\,\sqcap\cdots\,\sqcap s_m\=s'_m\,\Rightarrow\,t_1\=t'_1\,\sqcup\cdots\,\sqcup t_n\=t'_n],
\end{equation*}
where $n$ and $m$ are natural numbers not both equal to zero and  $s_i,s_i',t_j,t_j'$ are arbitrary terms
(We use the symbol $\sqcap$ for first order conjunction, $\sqcup$ for first order disjunction and $\Rightarrow$ for first order implication. The symbols $\meet$, $\join$ and $\to$ will denote operations in algebras.)
We call such formulas \emph{disjunctive universal sentences}. When $m=0$ we talk about \emph{disjunctive universal positive sentences}, when $n=1$ about \emph{quasi-identities}, when $m=0$ and $n=1$ about \emph{identities}. Recall also that a class of algebras is a \emph{universal positive class} if it is axiomatizable by disjunctive universal positive sentences, a \emph{quasivariety} if it is axiomatizable by quasi-identities, and a \emph{variety} if it is axiomatizable by identities.

Let $r$ be an inference rule $\{\varphi_1,\ldots,\varphi_m\}/\{\psi_1,\ldots,\psi_n\}$. By the \emph{translation} of $r$ we mean the disjunctive universal sentence ${\sf T}(r)$ given by
\[
(\forall \bar x)[\varphi_1\= 1\sqcap\cdots\sqcap\varphi_m\= 1\,\Rightarrow\, \psi_1\= 1 \,\sqcup\cdots\,\sqcup \psi_n\= 1].
\]

An inference rule $\Gamma/\Delta$ is a \emph{single-conclusion inference rule} if $|\Delta|=1$, a multi-theorem if $\Gamma=\0$, and a theorem if $\Gamma=\0$ and $|\Delta|=1$. The translation of a single-conclusion inference rule is a quasi-identity, of a multi-theorem is a disjunctive universal positive sentence, and of theorem is an identity.

The following completeness theorem follows from \cite[theorem 2.2]{Jer09}, see also \cite[Theorem 2.5 in Appendix]{BG14} for the modal case.

\begin{theorem}\label{thm:: completeness}
Let $\vdash$ be an intermediate or modal mcr axiomatized by a set of inference rules $R$. Let $\U_{\,\vdash}$ be the universal class of Heyting or modal algebras respectively axiomatized by ${\sf T}(R)$. Then for every inference rule $r$ we have  $r\in\, \vdash$ if and only if $\U_{\,\vdash}\models{\sf T}(r)$.
\end{theorem}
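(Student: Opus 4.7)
The plan is to prove the two directions separately.

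For soundness, $r\in\,\vdash$ implies $\U_{\,\vdash}\models{\sf T}(r)$, I would verify that $\vdash'=\{r:\U_{\,\vdash}\models{\sf T}(r)\}$ is itself an intermediate (resp.\ modal) mcr extending $R$; minimality of $\vdash$ then forces $\vdash\,\subseteq\,\vdash'$. Checking the four closure axioms amounts to routine semantic bookkeeping: reflexivity and weakening are immediate from the shape of ${\sf T}(r)$; cut reduces to a case split on whether an assignment sends $\varphi$ to $1$ in a given algebra; and structurality holds because ${\sf T}$ commutes with substitution. Soundness of the underlying intuitionistic or modal axioms over the relevant class of algebras is standard.

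For completeness I work by contrapositive: assuming $\Gamma/\Delta\,\notin\,\vdash$, I construct $\A\in\U_{\,\vdash}$ and a valuation refuting ${\sf T}(\Gamma/\Delta)$. Call a pair $(T,U)$ of sets of formulas \emph{$\vdash$-consistent} if no finite $\Gamma'\subseteq T$, $U'\subseteq U$ satisfy $\Gamma'\vdash U'$. By Zorn's lemma, using the finitarity of $\vdash$, extend $(\Gamma,\Delta)$ to a maximal such pair $(T^*,U^*)$. A short argument using cut yields the saturation properties: $T^*\cup U^* = Form$, $T^*\cap U^* = \0$, and $T^*$ is closed under $\vdash$ in the sense that $\Gamma'\subseteq T^*$ finite with $\Gamma'\vdash\{\varphi\}$ implies $\varphi\in T^*$.

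Now let $\A=\Fr/\theta$, where $\theta$ is the congruence on $\Fr$ given by $(\alpha,\beta)\in\theta$ iff both $T^*\vdash\{\alpha\to\beta\}$ and $T^*\vdash\{\beta\to\alpha\}$ (with the natural modal adaptation using $\mBox$-closure). Since $\vdash$ extends $INT$ (resp.\ $K$), the quotient $\A$ is a Heyting (resp.\ modal) algebra. The central claim is that $[\varphi]=1$ in $\A$ iff $\varphi\in T^*$: the forward direction uses $T^*\vdash\{\varphi\}\Rightarrow\varphi\in T^*$ by closure, the reverse is immediate from modus ponens. Hence sending each variable $p$ to its $\theta$-class refutes $\Gamma/\Delta$, as all of $\Gamma\subseteq T^*$ go to $1$ while every $\psi\in\Delta\subseteq U^*$ has $[\psi]\neq 1$ by $T^*\cap U^* = \0$.

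Finally, to check $\A\in\U_{\,\vdash}$, take any $s=\Gamma''/\Delta''\in R$ and lift a valuation on $\A$ to a substitution $\sigma:\Fr\to\Fr$. If $[\sigma(\alpha)]=1$ for every $\alpha\in\Gamma''$, then $\sigma(\Gamma'')\subseteq T^*$; combining $s\in\,\vdash$ with structurality and closure of $T^*$ under cut yields $T^*\vdash\sigma(\Delta'')$, whence consistency of $(T^*,U^*)$ together with $T^*\cup U^* = Form$ force some $\sigma(\beta)\in T^*$, i.e., $[\sigma(\beta)]=1$. The main obstacle is the Lindenbaum-style construction of the saturated pair $(T^*,U^*)$; this is the correct multi-conclusion substitute for the prime theory argument familiar from the single-conclusion completeness theorem, and it is the only place in the proof where one must handle multi-conclusion inference rules with real care.
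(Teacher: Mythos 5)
The paper does not actually prove this theorem: it is quoted from Je\v{r}\'abek \cite{Jer09} (Theorem 2.2 there; see also the appendix of \cite{BG14} for the modal case), so there is no in-paper argument to compare against. Your proof is correct and is essentially the standard argument from those sources: soundness by verifying that the semantically defined relation is itself an intermediate/modal mcr containing $R$ and invoking minimality, and completeness via a Lindenbaum--Tarski construction over a maximal $\vdash$-consistent \emph{pair} $(T^*,U^*)$, which is indeed the right multi-conclusion replacement for the prime-theory argument. Two small points are worth tidying up. First, in the final verification that $\A\in\U_{\,\vdash}$ you conclude that ``some $\sigma(\beta)\in T^*$''; when $\Delta''=\0$ (rules with empty conclusion are permitted, and their translations have an empty disjunction) the correct conclusion is instead that the hypothesis $\sigma(\Gamma'')\subseteq T^*$ already contradicts consistency of $(T^*,U^*)$, so ${\sf T}(s)$ holds vacuously in $\A$ --- this deserves a sentence. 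Second, lifting a valuation on $\A=\Fr/\theta$ to a substitution $\sigma$ requires substitutions to be arbitrary endomorphisms of the formula algebra (the paper's definition literally says ``automorphism,'' evidently a slip); with the usual endomorphism convention your lifting and the structurality step go through as written, and the same convention is needed to make your soundness check of structurality meaningful.
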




Note that the choice of the axiomatizing set $R$ of inference rules in Theorem \ref{thm:: completeness} 
is arbitrary. Indeed, whichever axiomatizing set for $\vdash$ we choose, we obtain the same class $\U_{\,\vdash}$ 
Moreover, the assignment $\vdash\;\mapsto\; \U_{\,\vdash}$ 
is injective. In addition, every disjunctive universal sentence in the language of Heyting or modal algebras is equivalent to some $\sf T(r)$ in the class of all Heyting or modal algebras. Thus the assignment $\vdash\;\mapsto\; \U_{\,\vdash}$ 
is also surjective.   
Hence the above facts allows us to switch completely in further considerations from mcrs 
to universal classes 
We say that $\vdash$ and $\U_{\,\vdash}$ 
 \emph{correspond to} each other.

\section{Grzegorczyk algebras}
\label{sec:: Grzegorczyk algebras}

A modal algebra $\M$ is called an $\emph{interior algebra}$ if for every  $a\in M$ it satisfies
\[
\mBox\mBox a = \mBox a\leq a.
\]
An element $a$ of $\M$ is \emph{open $\M$} if $\mBox a=a$. Recall that for a modal algebra there is one to one correspondence between its congruences and its open filters, i.e., Boolean filters closed under $\mBox$ operation. It is given by $\theta\mapsto 1/\theta$ and $F\mapsto \theta_F$, where $(a,b)\in \theta_F$ iff $a\leftrightarrow b\in F$. (Here $\theta$ is a congruence and $F$ is an open filter.) In particular, for an interior algebra, an element $b$ belongs to the open filter generated by $a$ iff $\mBox a\leq b$. It follows that an interior algebra is subdirectly irreducible iff it has a largest non-top open element. And an interior algebra is simple iff it has exactly two open elements 0 and 1.

An interior algebra $\M$ is a \emph{Grzegorczyk algebra} if it also satisfies
\begin{equation}\tag{Grz}
\mBox(\mBox(a\to\mBox a)\to a)\leq a
\end{equation}
for every $a\in M$ \cite{Esa79a}.

Recall that the variety of interior/Grzegorczyk algebras characterizes the modal logic S4/GRZ.

The condition (Grz) is rather difficult for ``intuitive understanding''. Nevertheless, some semantical characterizations were found. It is known that modal frame validates $grz$ iff it is Noetherian partially ordered set \cite[Theorem 3.38]{CZ97} (however see  \cite{Jer04} for set theoretical subtleties). Zakharyaschev showed that a general transitive frame validates $grz$ iff it is not subreducible to a modal frame which consists of one irreflexive  point, and to a modal frame which consists of two points with the total relation (two-element cluster) \cite[Proposition 9.3]{CZ97}, see also \cite{Str17} for an algebraic proof. Moreover, the class of Grzegorczyk algebras is an intersection of two splitting subvarieties of interior algebras. More precisely, Blok proved in \cite{Blo76} that an interior algebra is a Grzegorczyk algebra iff it does not have a subalgebra admitting a homomorphisms onto $\S_2$ or $\S_{1,2}$ (Corollary \ref{prop::characterization of Grzegorczyk algebras}). These algebras are depicted in Figure \ref{fig:: S_2 and S_1,2} (open elements are depicted by $\mBox$). 

\begin{figure}[h]
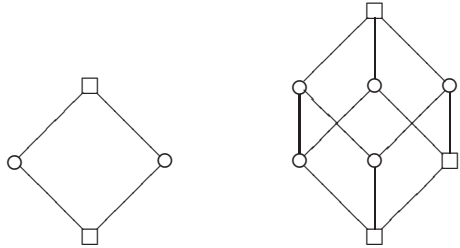

\[
\xy
(10,20)*{\text{\raisebox{0.9pt}{$\Box$}}}="AC";
(0,9.8)*{\piccirc}="A";  (20,10)*{\piccirc}="C";
(10,0)*{\text{\raisebox{0.9pt}{$\Box$}}}="ZERO";
(9.07,1.07); (.7,9.44) **\dir{-}; 
(10.93,1.07);  (19.35,9.49) **\dir{-}; 
(9.07,19.22); (.55,10.7) **\dir{-}; 
(19.32,10.82); (10.9,19.24) **\dir{-};  
\endxy \quad\quad\quad\quad
\xy
(10,30)*{\text{\raisebox{0.9pt}{$\Box$}}}="ONE";
(0.01,19.81)*{\piccirc}="AB"; (10,20)*{\piccirc}="AC"; (20,20)*{\piccirc}="BC";
(0,10)*{\piccirc}="A"; (10,10)*{\piccirc}="B"; (20,10)*{\text{\raisebox{0.9pt}{$\Box$}}}="C";
(10,0)*{\text{\raisebox{0.9pt}{$\Box$}}}="ZERO";
(9.07,1.07); (.7,9.44) **\dir{-}; 
(10,1.1); (10,9.2) **\dir{-}; 
(10.93,1.07);  (19.1,9.24) **\dir{-}; 
"A"; (0,19.0) **\dir{-}; 
(9.37,19.52); (.60,10.75) **\dir{-}; 
(9.37,10.78); (0.60,19.55) **\dir{-}; 
(19.37,19.52); (10.60,10.75) **\dir{-}; 
(19.07,11.07); (10.7,19.44) **\dir{-};  
(20,11.1); (20,19.2) **\dir{-}; 
(9.07,29.22); (.60,20.75) **\dir{-}; 
"AC"; (10,29.2) **\dir{-}; 
(19.37,20.78); (10.95,29.20) **\dir{-}; 
\endxy
\]
\caption{Interior algebras $\S_2$ and $\S_{1,2}$.}
\label{fig:: S_2 and S_1,2}
\end{figure}

It appears that the variety of Grzegorczyk algebras is generated, even as a universal class, by the class of all interior algebras which are generated by open elements. Such algebras are  isomorphic to free Boolean extensions of Heyting algebras, see Section \ref{sec:: the Blok-Esakia theorem}. In this section we provide a new proof of the fact that every interior algebra which is generated by open elements is a Grzegorczyk algebra (Corollary \ref{cor:: openly generated => Grzegorczyk}). The fact that the universal class generated by the free Boolean extensions of Heyting algebras is not a proper subclass of Grzegorczyk algebras follows from the Blok lemma. (Alternatively, one may use filtration and the fact that all finite Grzegorczyk algebras are generated by open elements. But we do not present this approach here). As a byproduct, we obtain new algebraic proofs of the above mentioned  Blok's characterization of Grzegorczyk algebras.

Let us start with recalling a crucial notion of stable homomorphisms. Let $\M$ and $\N$ be modal algebras. We say that a mapping $f\colon N\to M$ is a \emph{stable homomorphism} from $\N$ into $\M$ if it is a Boolean homomorphism and
\[
f(\mBox a)\leq \mBox f(a)
\]
holds for every $a\in M$. The reader may consult e.g. \cite{BBI16,Ghi10} for the importance of stable homomorphisms in modal logic (in \cite{Ghi10} they are called \emph{continuous morphism}). Note that a stable homomorphism does not need to be a homomorphism. For instance $\S_2$ does not admit a homomorphism onto a two-element interior algebra, but it admits such stable homomorphisms.


\begin{lemma}\label{fact::stable homo}
Let $\M$  be an interior algebra and $\S$ be a simple interior algebra. Then the mapping $f\colon M\to S$ is a stable homomorphism if and only if it is a Boolean homomorphism and for every $a\in M$ we have $f(\mBox a)\in\{0,1\}$.
\end{lemma}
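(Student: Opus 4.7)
My plan is to verify both implications directly from the definitions, using two observations: in any interior algebra, elements of the form $\mBox a$ are open (because $\mBox\mBox a=\mBox a$), and in a simple interior algebra the only open elements are $0$ and $1$.

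For the forward direction, assume $f$ is stable. It is a Boolean homomorphism by definition, so I only need to check $f(\mBox a)\in\{0,1\}$. The key move is to show that $f(\mBox a)$ is open in $\S$. Applying stability to the element $\mBox a$ gives
\[
f(\mBox a)=f(\mBox\mBox a)\leq \mBox f(\mBox a),
\]
while the interior algebra axiom $\mBox b\leq b$ applied in $\S$ yields $\mBox f(\mBox a)\leq f(\mBox a)$. Combining these, $f(\mBox a)=\mBox f(\mBox a)$, so $f(\mBox a)$ is open. Since $\S$ is simple, it has exactly two open elements, $0$ and $1$, so $f(\mBox a)\in\{0,1\}$.

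For the backward direction, assume $f$ is a Boolean homomorphism with $f(\mBox a)\in\{0,1\}$ for every $a\in M$. I need to check $f(\mBox a)\leq \mBox f(a)$. If $f(\mBox a)=0$ the inequality is immediate. If $f(\mBox a)=1$, then from $\mBox a\leq a$ (an interior algebra axiom) and the fact that Boolean homomorphisms preserve order, I get $1=f(\mBox a)\leq f(a)$, so $f(a)=1$ and hence $\mBox f(a)=\mBox 1=1$, giving the required inequality.

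I expect neither direction to present any real obstacle; the only subtlety is noticing the short calculation showing that stability forces $f(\mBox a)$ to be open, which reduces everything to the characterization of open elements in a simple interior algebra.
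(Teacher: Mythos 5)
Your proof is correct and follows essentially the same route as the paper: the forward direction applies the stability inequality to $\mBox a$ together with $\mBox b\leq b$ in $\S$ and the fact that a simple interior algebra has only $0$ and $1$ as open elements, and the backward direction is the same two-case check. The only cosmetic difference is that you phrase the forward step as ``$f(\mBox a)$ is open'' whereas the paper argues directly that $f(\mBox a)<1$ forces $f(\mBox a)=0$; the content is identical.
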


\begin{proof}
Assume that $f$ is a stable homomorphism. By the definition, $f$ is a Boolean homomorphism. Moreover, if $f(\mBox a)<1 $ then $f(\mBox a)=f(\mBox\mBox a)\leq\mBox f(\mBox a)=0$.
For the opposite implication assume that $f$ is a Boolean homomorphism and $f(\mBox a)=\{0,1\}$. If $f(\mBox a)=0$ then, clearly, $f(\mBox a)\leq \mBox f(a)$. If $f(\mBox a)=1$ then, since $\mBox a\leq a$, $f(a)=1$, and so $f(\mBox a)=1=\mBox f(a)$.
\end{proof}

\begin{proposition}\label{thm:: main characterization Grzegorczyk}
Let $\M$ be an interior algebra and $a\in M$. If {\rm (Grz)} for $a$ in $\M$, then there exists a stable homomorphism $h$ from $\M$ onto $\S_2$ such that $h(a)$ is a (co)atom in $\S_2$.
\end{proposition}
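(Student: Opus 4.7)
I first reformulate the conclusion via Lemma~\ref{fact::stable homo}: since $\S_2$ is simple, a stable homomorphism $h\colon\M\to\S_2$ is exactly a Boolean homomorphism with $h(\mBox x)\in\{0,1\}$ for every $x\in M$. Identifying $\S_2$ with $\2\times\2$ by its two Boolean ultrafilters, such an $h$ corresponds to an ordered pair $(U_1,U_2)$ of Boolean ultrafilters of $\M$ via $h(x)=(\chi_{U_1}(x),\chi_{U_2}(x))$. The stability clause translates to $U_1$ and $U_2$ agreeing on the set of open elements of $\M$; surjectivity to $U_1\neq U_2$; and $h(a)$ being a coatom to $a$ lying in exactly one of $U_1,U_2$. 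So the plan is to produce such a pair separating $a$.

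I would then pass to a convenient quotient. Let $c:=\mBox(\mBox(a\to\mBox a)\to a)$; since $c\not\leq a$ by hypothesis, Zorn's lemma yields an open filter $I$ of $\M$ containing $c$ and maximal with $a\notin I$. In $\bar{\M}:=\M/I$, writing $\bar a:=a/I$, the image of $c$ is $1$, so $\mBox(\mBox(\bar a\to\mBox \bar a)\to \bar a)=1$, and the elementary fact $\mBox z=1\Rightarrow z=1$ yields $\mBox(\bar a\to\mBox \bar a)\leq \bar a$ in $\bar{\M}$. Direct calculations rule out $\bar a\in\{0,1\}$ and $\bar a$ being open, while further manipulation combined with $\mBox\bar a\leq\bar a$ gives the key properties that $\bar a$ is \emph{dense} in $\bar{\M}$, i.e.\ $\mBox\neg\bar a=0$, and that $\mBox(\bar a\to\mBox\bar a)=\mBox\bar a$.

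The heart of the proof is the construction in $\bar{\M}$ of two Boolean ultrafilters $V_1,V_2$ agreeing on open elements and separating $\bar a$. If $\mBox\bar a=0$ then maximality of $I$ forces $\bar{\M}$ to be simple, so its only open elements are $0$ and $1$; any two distinct ultrafilters of $\bar{\M}$ thus agree on opens for free, and since $\bar a\in(0,1)$ I can just pick $V_1\ni\bar a$ and $V_2\not\ni\bar a$. If $\mBox\bar a>0$, however, $\bar{\M}$ is only subdirectly irreducible with monolith generated by $\mBox\bar a$, and the open sublattice is a genuine Heyting algebra; the plan is to choose a prime filter $P$ of that Heyting algebra avoiding $\mBox\bar a$ and extend $P\cup\{\bar a\}$ and $P\cup\{\neg\bar a\}$ to Boolean ultrafilters $V_1,V_2$ of $\bar{\M}$ with traces on opens both equal to $P$. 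The main obstacle I anticipate is the simultaneous compatibility of both extensions for a suitable $P$, which by Boolean manipulation reduces to ensuring $\mBox(\bar a\to o)\notin P$ and $\mBox(\neg\bar a\to o)\notin P$ whenever $o$ is an open not in $P$; this is where the denseness of $\bar a$ and the identity $\mBox(\bar a\to\mBox\bar a)=\mBox\bar a$ must be used essentially.

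Finally, I pull $V_1,V_2$ back along $\M\twoheadrightarrow\bar{\M}$ to ultrafilters $U_1,U_2$ of $\M$; they remain distinct and agree on open elements of $\M$ (since every open of $\bar{\M}$ is the image of one from $\M$), and $a\in U_1\setminus U_2$ because $\bar a\in V_1\setminus V_2$. By the reformulation step, the associated Boolean homomorphism $\M\twoheadrightarrow\S_2$ is then a stable surjection sending $a$ to a coatom.
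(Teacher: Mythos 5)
Your setup and first half coincide with the paper's proof: the same maximal open filter (your $I$, containing $\mBox(\mBox(a\to\mBox a)\to a)$ and maximal with $a\notin I$, is the paper's $G$), the same quotient $\bar{\M}$, and the facts you list about $\bar a$ (not open, $0<\mBox\bar a<\bar a<1$, $\mBox(\bar a\to\mBox\bar a)=\mBox\bar a$, density) are all correct. Your reformulation of the target as a pair of distinct Boolean ultrafilters $(U_1,U_2)$ agreeing on open elements and separating $a$ is also correct, and it is precisely the dual of what the paper does (the paper divides the Boolean reduct of $\bar{\M}$ by the Boolean filter generated by $\neg\mBox\bar a$ and then projects onto $\S_2$; a Boolean surjection onto $\S_2\cong\2\times\2$ is the same data as such a pair).

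The gap is that the decisive step is never carried out: in the case $\mBox\bar a>0$ you correctly reduce the existence of $V_1,V_2$ to finding a prime filter $P$ of ${\sf O}(\bar{\M})$ with $\mBox(\bar a\to o)\notin P$ and $\mBox(\neg\bar a\to o)\notin P$ for every open $o\notin P$, but then declare this ``the main obstacle'' and only gesture at density and $\mBox(\bar a\to\mBox\bar a)=\mBox\bar a$. What you are missing is the structural consequence of the maximality of $I$ that the paper's proof leans on: every nontrivial open filter of $\bar{\M}$ contains $\bar a$, hence every open $o<1$ of $\bar{\M}$ satisfies $o\leq\mBox\bar a$. This forces your $P$ to be $\{1\}$ (any larger filter of opens contains some open $o<1\leq$, hence $\mBox\bar a$, which you excluded), it makes $\{1\}$ prime (the join of two non-top opens is $\leq\mBox\bar a<1$), and it makes both compatibility conditions immediate: for $P=\{1\}$ they read $\bar a\not\leq o$ and $\neg\bar a\not\leq o$ for every open $o<1$, and both follow from $o\leq\mBox\bar a<\bar a<1$. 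With this observation your case distinction disappears ($P=\{1\}$ works uniformly, including when $\mBox\bar a=0$), and neither the density of $\bar a$ nor the identity $\mBox(\bar a\to\mBox\bar a)=\mBox\bar a$ is actually needed. As written, though, the proof stops exactly where the real content begins, and the tools you flag as ``essential'' are not the ones that close it.
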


\begin{proof}
Let us consider the term
\[
t(x)=\mBox(x\to\mBox x)\to x.
\]
Let $G$ be a maximal open filter of $\M$ with respect to the following conditions
\[
a\not\in G \quad \text { and }\quad t(a)\in G.
\]
Since the inequality (Grz) does not hold, the Zorn lemma guarantees the existence of $G$. Let $\M'=\M/G$, $g\colon \M\to\M';\; d\mapsto d/G$, and  $a'=g(a)=a/G$. Then (Grz) does not hold in $\M'$ for $a'$ either. Furthermore,
$\M'$ is subdirectly irreducible and a least nontrivial open filter of $\M$ is the open filter generated by $a'$. It follows that all open non-top elements in $\M'$ are in the interval $[0,\mBox a']$.

Let $\S$ be the interior algebra constructed in the following way. Its Boolean reduct is a quotient of Boolean reduct of  $\M'$ divided by the Boolean filter $F$ generated by $\neg \mBox a'$. We equip $\S$ with only two open elements: the top and the bottom ones. This means that $\mBox b=0$ iff $b<1$ and $\mBox 1=1$ in $\S$. Let $f\colon M'\to S;\; b\mapsto b/F$. Clearly, $f$ is a Boolean surjective homomorphism. Moreover, $f^{-1}(0)=[0,\mBox a']$. Hence Lemma \ref{fact::stable homo} yields that $f$ is a stable homomorphism from $\M'$ onto $\S$.

Let $\mBox a'\leq b,c$ and $b\neq c$. Then $\mBox a'\leq b\leftrightarrow c<1$ and hence $\neg \mBox a'\not\leq b\leftrightarrow c$. Thus the pair $(b,c)$ is not in the Boolean congruence associated with $F$. This shows that restriction of $f$ to the interval $[\mBox a',1]$ is injective. We prove that this interval has more than two elements.

For every $b\in M'$ we have
\[
 t(\mBox b)=\mBox b.
\]
But in $\M'$ we have $t(a')=1>a'$. Thus $a'$ is not open and hence
\[
\mBox a' < a' <1.
\]
This and the fact that the restriction of $f$ onto $[\mBox a',1]$ is injective yields that $\S$ has at least four elements. In particular, there exists a Boolean homomorphism $k$ from the Boolean reduct of $\S$ onto the Boolean reduct of $\S_2$. Clearly, $k$ is also a stable homomorphism. Also, we may choose $k$ such that $kfg(a)$ is a (co)atom in $\S_2$.

Finally, a composition of stable surjective homomorphisms is a stable surjective homomorphism. Thus the composite mapping
\[
kfg
\]
is a stable homomorphism from $\M$ onto $\S_2$.
\end{proof}

The converse of Proposition \ref{thm:: main characterization Grzegorczyk} 
does not hold.  Indeed, let $\M$ be an interior algebra where the carrier $M$ is the power set of the set $\Nat$ of natural numbers, the Boolean operations are the set theoretic operations, and
\[
\mBox a=
\begin{cases}
1 & \text{ if } a=1 \\
\{0,\ldots,k-1\} & \text{ if } a\neq 1\text{ and } k=\min\neg a
\end{cases}.
\]
Note that $\M$ is the dual algebra for the modal frame $(\Nat,\geq)$ in the J\'onsson-Tarski duality. Then $\M$ is a Grzegorczyk algebra admitting a stable homomorphism onto $\S_2$, see \cite[Example 3.5]{Str17} for details. However, the converse of Proposition \ref{thm:: main characterization Grzegorczyk} holds for finite interior algebras. Indeed, it follows from Zakharyaschev's characterization of Grzegorczyk frames, see also \cite[Theorem 2.4]{Str17}.

\begin{corollary}\label{cor:: openly generated => Grzegorczyk}
Let $\M$ be an interior algebra generated by its open element. Then $\M$ is a Grzegorczyk algebra.
\end{corollary}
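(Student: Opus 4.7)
The plan is to argue by contraposition using Proposition \ref{thm:: main characterization Grzegorczyk}. Suppose, for contradiction, that $\M$ is an interior algebra generated (as a modal algebra) by its open elements, yet $\M$ is not a Grzegorczyk algebra. Then (Grz) fails at some element $a\in M$, and Proposition \ref{thm:: main characterization Grzegorczyk} produces a stable homomorphism $h\colon \M\to\S_2$ onto $\S_2$ (in fact with $h(a)$ a coatom, but we only need surjectivity).

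The next step is to show that every element of $\M$ is mapped into the two-element set $\{0,1\}\subseteq S_2$, which contradicts the surjectivity of $h$ since $|S_2|=4$. First, since $\S_2$ is simple, Lemma \ref{fact::stable homo} gives $h(\mBox c)\in\{0,1\}$ for every $c\in M$. In particular, for every open element $b$ of $\M$ we have $h(b)=h(\mBox b)\in\{0,1\}$.

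Now I would proceed by structural induction on terms witnessing the generation of $\M$ by its open elements. Boolean operations preserve the subset $\{0,1\}$ of $S_2$: since $h$ is a Boolean homomorphism, $h(\neg t)=\neg h(t)$, $h(t_1\meet t_2)=h(t_1)\meet h(t_2)$, and $h(t_1\join t_2)=h(t_1)\join h(t_2)$ all lie in $\{0,1\}$ whenever the arguments do. For the modal operation, the inductive step is immediate from Lemma \ref{fact::stable homo}: $h(\mBox t)\in\{0,1\}$ regardless of $h(t)$. Combined with the base case (open generators are sent into $\{0,1\}$), this shows $h(M)\subseteq\{0,1\}$.

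The main obstacle — or rather, the key observation that makes the argument work — is precisely the convenient fact isolated in Lemma \ref{fact::stable homo}: a stable homomorphism into a simple interior algebra automatically forces every $\mBox$-image into $\{0,1\}$. Without this, the induction would break at the $\mBox$-step, since stable homomorphisms only satisfy $h(\mBox t)\leq \mBox h(t)$ rather than equality. The conclusion that $h$ cannot be surjective onto the four-element algebra $\S_2$ then contradicts Proposition \ref{thm:: main characterization Grzegorczyk}, establishing that $\M$ must satisfy (Grz) for every element.
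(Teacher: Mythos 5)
Your proposal is correct and is essentially the paper's own argument: both rest on Lemma \ref{fact::stable homo} forcing every $\mBox$-image into $\{0,1\}$ and on the non-surjectivity of any stable homomorphism onto $\S_2$, contradicting Proposition \ref{thm:: main characterization Grzegorczyk}. The only cosmetic difference is that you handle the $\mBox$-step inside the induction directly, whereas the paper first observes that $\M$ is already Boolean generated by its open elements.
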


\begin{proof}
Let $f$ be a stable homomorphism from $\M$ into $\S_2$. By Lemma \ref{fact::stable homo}, for every $a\in M$ we have $f(\mBox a)\in \{0,1\}$. Since $f$ preserves Boolean operations, $\{0,1\}$ is closed under Boolean operations and in fact the algebra $\M$ is also Boolean generated by its open elements, $f$ maps $M$ onto $\{0,1\}$. Thus $f$ is not surjective.
\end{proof}

Let us also show that
the proof of Proposition \ref{thm:: main characterization Grzegorczyk} may be slightly modified in order to obtain a new proof of Blok's characterization for Grzegorczyk algebras.

\begin{corollary}[\protect{\cite[Example III.3.9]{Blo76}}]
\label{prop::characterization of Grzegorczyk algebras}
An interior algebra $\M$ is a Grzegorczyk algebra if and only if it does not have a subalgebra with a homomorphic image isomorphic to  $\S_2$ or to $\S_{1,2}$.
\end{corollary}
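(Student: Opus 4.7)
I would handle both directions by contraposition. For the forward direction, I would first check that $\S_2$ and $\S_{1,2}$ themselves fail (Grz): in $\S_2$, taking an atom $a$ gives $\mBox(\mBox(a\to\mBox a)\to a)=\mBox 1=1\not\leq a$, and an analogous short calculation in $\S_{1,2}$ (taking $a$ to be the join of an open atom and a non-open atom) produces the same failure. Because Grzegorczyk algebras form a variety, they are closed under subalgebras and homomorphic images, so if some subalgebra of $\M$ had a homomorphic image isomorphic to $\S_2$ or $\S_{1,2}$, then $\M$ itself could not be a Grzegorczyk algebra.

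For the converse, assume (Grz) fails for some $a\in\M$. I would reuse the opening construction in the proof of Proposition~\ref{thm:: main characterization Grzegorczyk}: form the subdirectly irreducible quotient $\M'=\M/G$ and let $a'=a/G$, so that $\mBox a'<a'<1$ and $\mBox a'$ is the largest non-top open element of $\M'$. Setting $p=\mBox a'$ and $q=\mBox\neg a'$, the element $q$ is open with $q\leq\neg a'<1$; since $p$ is the largest non-top open element, $q\leq p$. Combined with $p\meet q\leq\mBox(a'\meet\neg a')=0$, this forces $q=0$.

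The key step is to identify the interior subalgebra $\N'$ of $\M'$ generated by $a'$. When $p=0$, the Boolean reduct of $\N'$ is $\{0,a',\neg a',1\}$ and every non-top element is sent to $0$ by $\mBox$, so $\N'\cong\S_2$. When $p>0$, the Boolean subalgebra generated by $\{a',p\}$ has three atoms $p$, $a'\meet\neg p$ and $\neg a'$ (the middle one is nonzero, for otherwise $a'\leq p$ would make $a'$ open); using that all open non-top elements of $\M'$ lie in $[0,p]$, I would verify that $\mBox$ sends every Boolean combination into $\{0,p,1\}$, so the subalgebra has exactly eight elements with open elements $\{0,p,1\}$, matching $\S_{1,2}$ under the obvious identification of atoms.

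To conclude, I would take $\N$ to be the interior subalgebra of $\M$ generated by $a$. The canonical surjection $g\colon\M\to\M'$ restricted to $\N$ has image exactly the subalgebra of $\M'$ generated by $g(a)=a'$, which is $\N'$. Hence $\N$ is a subalgebra of $\M$ with a homomorphic image isomorphic to $\S_2$ or $\S_{1,2}$, as required. The main obstacle is the bookkeeping in the $p>0$ case: one must check that every $\mBox$-value of a Boolean combination of $a'$ and $p$ already lies in the eight-element set, which is precisely where the property that all open non-top elements of $\M'$ are $\leq p$ does the real work.
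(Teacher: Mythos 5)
Your proposal is correct and follows essentially the same route as the paper: both directions reduce to the subdirectly irreducible quotient $\M'$ with $a'=a/G$, split on whether $\mBox a'=0$, and identify the subalgebra generated by $a'$ with $\S_2$ or with the eight-element algebra $\S_{1,2}$ using the fact that $\mBox a'$ is the largest non-top open element. The only (welcome) difference is cosmetic: you make explicit the passage from a subalgebra of the quotient $\M'$ to a homomorphic image of the subalgebra of $\M$ generated by $a$, and you compute all $\mBox$-values at once via ``$\mBox b=p$ iff $p\leq b<1$'' instead of the paper's two targeted identities.
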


\begin{proof}
Assume that $\M$ is an interior algebra and $a\in M$ are such that (Grz) does not hold for $a$. Let $\M'$ and $a'$ be as in the proof of Proposition \ref{thm:: main characterization Grzegorczyk}. Let $\N$ be a subalgebra of $\M'$ generated by $a'$. We claim that $\N$ is isomorphic to $\S_2$ or to $\S_{1,2}$.

If $\mBox a'=0$, then $\N$ is simple and hence, since it is generated by $a'$, is isomorphic to $\S_2$. So let us assume that
\[
0<\mBox a'<a'<1.
\]
Let $P$ be the carrier of the eight-element Boolean algebra generated by this chain.
This means that
\[
P=\{
a',\mBox a', \neg a', \neg\mBox a', \neg a\join \mBox a', a\meet \neg\mBox a',0,1\}.
\]
In order to show that $\N$ is isomorphic to $\S_{1,2}$ it is enough to verify that $\mBox \neg \mBox a' =0$ and $\mBox(\neg a'\join \mBox a')=\mBox a'$. This, in particular, would show that $P$ is closed under the $\mBox$ operation and, since $P$ is closed under the Boolean operations, $P=N$.

Here we will use the fact that $\mBox a'$ is the largest not equal to 1 open element in $\N$. So
\[
\mBox \neg \mBox a' = \mBox \neg \mBox a' \meet \mBox a'=\mBox \neg \mBox a'\meet \mBox\mBox a'=\mBox (\neg \mBox a' \meet \mBox a')=\mBox 0=0
\]
and
\[
\mBox(\neg a'\join \mBox a')=\mBox(\neg a'\join \mBox a')\meet\mBox a'=
\mBox((\neg a'\join \mBox a')\meet a')=\mBox \mBox a'=\mBox a'.
\]

For the converse, note that (Grz) fails for any coatom in $\S_2$ and in $\S_{1,2}$. Thus it fails for some element in every algebra having subalgebra with a homomorphic image isomorphic to $\S_2$ or to $\S_{1,2}$.

\end{proof}

\section{The Blok-Esakia theorem}\label{sec:: the Blok-Esakia theorem}

The connection of Heyting algebra with interior algebra is given by the following McKinsey-Tarski theorem \cite[Section 1]{MT46} (see also \cite[Chapter 1]{Blo76}, \cite[Theorem 2.2]{BD75} and \cite[Section 3]{MR74}).  Recall that open elements of an interior algebra $\M$ form the Heyting algebra $\sf O(\M)$ with the order structure inherited from $\M$.

\begin{theorem}\label{prop:: McKinsey-Tarski thm}
For every Heyting algebra $\H$ there is an interior algebra ${\sf B}(\H)$ such that
\begin{enumerate}
\item ${\sf OB}(\H)=\H$;
\item for every interior algebra $\M$, if
$\H\leq{\sf O}(\M)$, then ${\sf B}(\H)$ is isomorphic to the subalgebra of $\M$ generated by $H$;
\item for every interior algebra $\M$ and every
    homomorphism $f\colon\H\to{\sf O}(\M)$ there is a unique homomorphism $\bar f\colon{\sf B}(\H)\to\M$ extending $f$.
\end{enumerate}
\end{theorem}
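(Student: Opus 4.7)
My plan is to construct ${\sf B}(\H)$ explicitly via Priestley/Esakia duality, verify properties (1) and (3), and then deduce (2) formally from them.

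For the construction, I would take $X$ to be the set of prime filters of $\H$ ordered by inclusion and equipped with the Priestley topology generated by the sets $\hat a = \{p \in X : a \in p\}$ and their complements for $a \in H$. Set ${\sf B}(\H)$ to be the Boolean algebra of all clopen subsets of $X$, with $\mBox U = \{p \in X : {\uparrow} p \subseteq U\}$ for $U \in {\sf B}(\H)$. Compactness of $X$ together with the Esakia separation property force $\mBox U$ to be clopen, and a routine check shows that ${\sf B}(\H)$ with $\mBox$ is an interior algebra. The map $a \mapsto \hat a$ embeds $\H$ into ${\sf B}(\H)$ as a bounded sublattice.

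Property (1) is then immediate: $\mBox U = U$ iff $U$ is a clopen up-set iff $U = \hat a$ for some $a \in H$. For (3), given a Heyting homomorphism $f \colon \H \to {\sf O}(\M)$, I would use that ${\sf B}(\H)$ is the free Boolean extension of the bounded distributive lattice $\H$ (see \cite{BD75}): this supplies a unique Boolean homomorphism $\bar f \colon {\sf B}(\H) \to \M$ extending $f$. The nontrivial step is checking $\bar f(\mBox U) = \mBox \bar f(U)$. I plan to do this by writing $U$ in disjunctive normal form over $H$ and exhibiting $\mBox U$ as an explicit Heyting-Boolean combination of the generators; applying $\bar f$ then yields an element of ${\sf O}(\M)$, hence fixed by $\mBox$, giving the equality. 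Uniqueness of $\bar f$ is forced since $H$ Boolean-generates ${\sf B}(\H)$.

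For (2), I would apply (3) to the inclusion $\iota \colon \H \hookrightarrow \M$, obtaining $\bar\iota \colon {\sf B}(\H) \to \M$. Its image is an interior subalgebra of $\M$ containing $H$, hence equals the subalgebra $\N$ generated by $H$. For injectivity I would use the correspondence between congruences and open filters recalled earlier in the paper: the open filter $1/{\ker \bar\iota}$ is generated by its open elements, which by (1) lie in $H \cap 1/{\ker \bar\iota} = \{1\}$ (since $\iota$ is injective on $H$); hence $\ker \bar\iota$ is trivial. The main obstacle is the verification in (3) that $\bar f$ commutes with $\mBox$: this is the heart of the McKinsey-Tarski theorem and relies on a finite Heyting-Boolean representation of $\mBox U$ in terms of elements of $H$, whose existence ultimately draws on compactness of $X$ and the Esakia property.
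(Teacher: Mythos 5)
The paper gives no proof of this theorem: it is quoted as the classical McKinsey--Tarski result with pointers to \cite{MT46}, \cite{Blo76}, \cite{BD75} and \cite{MR74}, so there is no in-paper argument to compare yours with. Your duality-theoretic construction is a legitimate alternative to the algebraic route in those sources (which build ${\sf B}(\H)$ as the free Boolean extension of the underlying bounded distributive lattice and define $\mBox$ algebraically rather than on a dual space). The overall architecture --- construct ${\sf B}(\H)$ on the Esakia dual, prove (1) and (3), and derive (2) from (3) together with the congruence/open-filter correspondence --- is sound, and your derivation of (2) in particular is correct: the image of $\bar\iota$ is generated by $H$ because ${\sf B}(\H)$ is, and injectivity follows since any $b$ in the kernel filter satisfies $\mBox b\in (1/\ker\bar\iota)\cap H=\{1\}$, whence $b=1$.

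Three points need repair or more care. First, the key computation in (3) should use the \emph{conjunctive} normal form: every element of ${\sf B}(\H)$ is a finite meet of elements $\neg\hat a\join\hat b$ with $a,b\in H$, and the identity doing all the work is $\mBox(\neg\hat a\join\hat b)=\widehat{a\to b}$ (one inclusion is the modus-ponens inequality $a\meet(a\to b)\leq b$, the other a prime filter extension argument); since $\mBox$ distributes over finite meets but \emph{not} over joins, a disjunctive normal form does not directly express $\mBox U$ in terms of the generators. Second, (1) asserts ${\sf OB}(\H)=\H$ as \emph{Heyting} algebras, so besides identifying the open elements with the clopen up-sets $\hat a$ you must check that the induced implication $\mBox(\neg\hat a\join\hat b)$ equals $\widehat{a\to b}$; this is the same identity, so it should be stated once and invoked in both places. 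Third, the Esakia property of the Priestley dual of a Heyting algebra (that ${\downarrow}U$ is clopen for clopen $U$, which is exactly what makes $\mBox U$ clopen) is not automatic for distributive lattices and is itself proved using the Heyting implication; as written you assume it. None of these is a fatal gap, but they are precisely where the content of the theorem lives, so a complete write-up must supply them.
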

The algebra ${\sf B}(\H)$ is called the \emph{free Boolean extension of} $\H$. We will treat $\sf O$ and $\sf B$ as a class operators. This is, for a class $\K$ of Heyting algebras and a class $\mathcal M$ of interior algebras we put ${\sf B}(\K)=\{{\sf B}(\H)\mid \H\in\K\}$ and  ${\sf O}(\mathcal M)=\{{\sf O}(\M)\mid \M\in\mathcal M\}$. From Theorem \ref{prop:: McKinsey-Tarski thm}, we immediately obtain
\[
{\sf OB}(\K)=\K\quad\quad\text{ and }\quad\quad{\sf BO}(\mathcal M)\subseteq {\sf S}(\mathcal M).
\]
Let us list their basic properties with respect to other class operators: $\sf H$ homomorphic image, $\sf S$ subalgebra, $\sf P$ product, and $\sf P_U$ ultraproduct class operators. (We tacitly
assume that all class operators are composed with the isomorphic image class operator, i.e, $\sf S=IS$ and so on.)

\begin{lemma}\label{lem:: basic properties of B and O}
Let $\K$ be a class of Heyting algebras and $\mathcal M$ be a class of interior algebras. Then
\begin{align*}
{\sf OH}(\mathcal M)&={\sf HO}(\mathcal M),&{\sf BH}(\K)&={\sf HB}(\K),\\
{\sf OS}(\mathcal M)&={\sf SO}(\mathcal M),&{\sf BS}(\K)&={\sf SB}(\K),\\
{\sf OP}(\mathcal M)&={\sf PO}(\mathcal M),&{\sf BP}(\K)&\subseteq{\sf SPB}(\K),\\
{\sf OP_U}(\mathcal M)&={\sf P_UO}(\mathcal M),&{\sf BP_U}(\K)&\subseteq{\sf SP_UB}(\K).
\end{align*}
\end{lemma}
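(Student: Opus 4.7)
The plan is to verify the eight identities one-by-one, with Theorem~\ref{prop:: McKinsey-Tarski thm} (McKinsey-Tarski) doing the bulk of the work and the observation that openness is the atomic first-order condition $\mBox a \approx a$ handling everything first-order.

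For the ${\sf O}$-column I would argue as follows. The product and ultraproduct identities ${\sf OP}={\sf PO}$ and ${\sf OP_U}={\sf P_UO}$ are straightforward: $\mBox$ and the Heyting operations are computed coordinate-wise, and \L{}o\'s's theorem handles the ultraproduct case since $\mBox a \approx a$ is atomic. For ${\sf OS}={\sf SO}$, the inclusion $\subseteq$ is immediate from ${\sf O}(\N)=N\cap{\sf O}(\M)$, while for $\supseteq$ a Heyting subalgebra $\H\leq{\sf O}(\M)$ is realized as ${\sf O}$ of the interior subalgebra of $\M$ generated by $H$; by Theorem~\ref{prop:: McKinsey-Tarski thm}(2) this subalgebra is isomorphic to ${\sf B}(\H)$ and has open elements exactly $\H$. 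For ${\sf OH}={\sf HO}$, a surjection $h\colon\M\twoheadrightarrow\N$ restricts to a Heyting surjection ${\sf O}(\M)\twoheadrightarrow{\sf O}(\N)$ via $b=h(\mBox a)$; conversely, any Heyting quotient of ${\sf O}(\M)$ lifts to an interior quotient of $\M$ through the bijection between Heyting filters $G$ of ${\sf O}(\M)$ and open filters $F_G=\{a\in M\mid(\exists g\in G)\; g\leq a\}$ of $\M$ (which was essentially recorded already in Section~\ref{sec:: Grzegorczyk algebras}).

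For the ${\sf B}$-column, the forward inclusions of ${\sf BH}={\sf HB}$ and ${\sf BS}={\sf SB}$ come directly from parts (2)-(3) of Theorem~\ref{prop:: McKinsey-Tarski thm}. A Heyting surjection $\H\twoheadrightarrow\H'$ extends uniquely to an interior homomorphism ${\sf B}(\H)\to{\sf B}(\H')$, surjective because ${\sf B}(\H')$ is generated by $\H'$; an embedding $\H'\hookrightarrow\H$ extends to ${\sf B}(\H')\hookrightarrow{\sf B}(\H)$ by part (2). For ${\sf HB}\subseteq{\sf BH}$, an interior surjection ${\sf B}(\H)\twoheadrightarrow\N$ restricts on opens (by ${\sf OH}={\sf HO}$, just proven) to $\H\twoheadrightarrow{\sf O}(\N)$, and $\N$ is generated by ${\sf O}(\N)$ as an epimorphic image of the opens-generated algebra ${\sf B}(\H)$, giving $\N\cong{\sf B}({\sf O}(\N))$. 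For the product and ultraproduct inclusions, compose the diagonal embeddings $\H_i\hookrightarrow{\sf OB}(\H_i)$ to obtain
\[
\prod_i \H_i \;\hookrightarrow\; \prod_i {\sf OB}(\H_i) \;=\; {\sf O}\Bigl(\prod_i {\sf B}(\H_i)\Bigr)
\]
(using the ${\sf O}$-product identity, or its ultraproduct analogue, just established), and then invoke Theorem~\ref{prop:: McKinsey-Tarski thm}(2) to embed ${\sf B}(\prod_i \H_i)$ into $\prod_i {\sf B}(\H_i)$; the same argument works verbatim for ultraproducts.

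The main obstacle I anticipate is the remaining direction ${\sf SB}\subseteq{\sf BS}$. Given $\N\leq{\sf B}(\H)$ and setting $\H'={\sf O}(\N)\leq\H$, Theorem~\ref{prop:: McKinsey-Tarski thm}(2) identifies ${\sf B}(\H')$ with the interior subalgebra of $\N$ generated by $\H'$, so the claim reduces to showing that $\N$ is generated by its open elements. This is not a formal consequence of the universal property and requires a genuine structural argument on ${\sf B}(\H)$ itself, exploiting for instance that on the Esakia dual of $\H$ every point lies below a maximal element (so no nontrivial $x\in{\sf B}(\H)$ satisfies $\mBox x=\mBox\neg x=0$), in order to show that subalgebras closed under $\mBox$ are controlled by their open parts, and hence are of the form ${\sf B}(\H')$ with $\H'\leq\H$.
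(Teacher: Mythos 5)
Most of your plan coincides with the paper's (very terse) proof. The first column is handled coordinate-wise and by \L{}o\'s exactly as in the paper, which writes out only the ${\sf OP_U}={\sf P_UO}$ case in detail; your treatment of ${\sf BH}={\sf HB}$ is correct and considerably more explicit than the paper's bare ``follows from Theorem \ref{prop:: McKinsey-Tarski thm}''; and your route to ${\sf BP}(\K)\subseteq{\sf SPB}(\K)$ via $\prod_i\H_i={\sf O}\bigl(\prod_i{\sf B}(\H_i)\bigr)$ is literally the paper's computation ${\sf BP}(\K)={\sf BPOB}(\K)={\sf BOPB}(\K)\subseteq{\sf SPB}(\K)$, and likewise for ultraproducts.

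The genuine gap is the one you flag yourself: the inclusion ${\sf SB}(\K)\subseteq{\sf BS}(\K)$, which you correctly reduce to the claim that every subalgebra $\N$ of ${\sf B}(\H)$ is Boolean-generated by ${\sf O}(\N)$, is left unproved, and the strategy you sketch would not close it. The property you propose to exploit --- that no nontrivial $x\in{\sf B}(\H)$ satisfies $\mBox x=\mBox\neg x=0$ --- cannot force subalgebras to be controlled by their open parts, because it does not even force the ambient algebra to be so controlled: the paper's own example following Proposition \ref{thm:: main characterization Grzegorczyk}, the Grzegorczyk algebra $\M$ on the power set of $\Nat$ dual to $(\Nat,\geq)$, has no element with $\mBox x=\mBox\neg x=0$ (since exactly one of $x,\neg x$ contains $0$), yet ${\sf BO}(\M)$ is the finite--cofinite algebra, a proper subalgebra of $\M$. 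What is actually needed is the McKinsey--Tarski/Blok fact that the class of interior algebras generated by their open elements is closed under subalgebras. Its proof is a normal-form argument: every element of ${\sf B}(\H)$ is a finite join of differences $c\meet\neg d$ with $c,d$ open; when such a single difference $x$ lies in $\N$ it is recovered inside $\langle{\sf O}(\N)\rangle$ by the explicit formula $x=\mBox(x\join\mBox\neg x)\meet\neg\,\mBox\neg x$, and the general case requires a further induction on the number of pieces, peeling off the part $\mBox(x\join\mBox\neg x)\meet\neg\,\mBox\neg x\leq x$ at each stage. To be fair, the paper dismisses both second-column equalities with a one-line appeal to Theorem \ref{prop:: McKinsey-Tarski thm}, which honestly only yields ${\sf BS}(\K)\subseteq{\sf SB}(\K)$ (the direction actually used later, e.g.\ in Lemma \ref{lem:: sigmaV=VB sigmaQ=QB}); so you have located a real subtlety that the paper glosses over, but the fix you propose is not the right one.
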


\begin{proof}
The equalities from the first column follows directly from the definitions. Let us illustrate this by verifying the fourth one. Let $\M=\prod_i\M_i/U$ and $\H=\prod_i \H_i/U$ be ultraproducts of interior algebras and Heyting algebras respectively, where  $\H_i={\sf O}(\M_i)$. Let $\alpha$ and $\beta$ be congruences of $\prod_i\M_i$ and $\prod_i\H_i$ respectively induced by the ultrafilter $U$. Then $\beta=\alpha\cap (\prod_i H_i)^2$. It follows that the mapping $f\colon c/\beta\mapsto c/\alpha$ is an embedding of $\H$ into ${\sf O}(\M)$. In order to see that $f$ is surjective, note that for $c\in \prod_i M_i$ we have the equivalences: $c/\alpha$ is open iff $\{i\mid c_i=\mBox c_i\}\in U$ iff $c\,\alpha\,c'$, where $c_i'=c_i$ if $\mBox c_i=c_i$ and $c'_i=1$ otherwise. Hence,  if $c/\alpha$ is open, then $c/\alpha=c'/\alpha=f(c'/\beta)$.

The equalities from the second column follows from Theorem \ref{prop:: McKinsey-Tarski thm}. For the first containment, note that
\[
{\sf BP }(\K)={\sf BPOB}(\K)={\sf BOPB}(\K)\subseteq{\sf SPB}(\K).
\]
Indeed, here the first equality and the containment follows from the properties of $\sf B$ and $\sf O$ recalled before the lemma, and the second equality follows from the third equality in the first column.
The proof of the second inclusion is analogical.
\end{proof}

\vspace{.5em}

Let $\Hey$ be the variety of all Heyting algebras, and $\Grz$ be the variety of all Grzegorczyk algebras. Let $\mathcal L_V(\Hey)$ and $\mathcal L_V(\Grz)$ denote the lattices of varieties of Heyting algebras and Grzegorczyk algebras respectively. Recall two operators on these lattices:
\begin{align*}
\rho&\colon\mathcal L_V(\Grz)\to\mathcal L_V(\Hey);\; \W\mapsto{\sf O}(\W)\\
\sigma&\colon\mathcal L_V(\Hey)\to\mathcal L_V(\Grz);\;\V\mapsto{\sf VB}(\V),
\end{align*}
where ${\sf V(\K)}$ denotes a least variety containing $\K$.
(The analog of this operator for logics was introduced in \cite{MR74}.)
Notice that $\rho$ is well defined, i.e., when $\W$ is a variety of interior algebras, then $\rho(\W)$ is indeed the variety of Heyting algebras. Indeed, this follows from Lemma \ref{lem:: basic properties of B and O}. A much less obvious fact is that if $\V$ is a variety of Heyting algebras, then $\sigma(\V)$ is a variety of Grzegorczyk algebras. This fact follows from the following Proposition (based on the previous section).

\begin{proposition}[\protect{\cite[Corollary III.7.9]{Blo76}}]\label{prop:: B(H) is Grzegorczyk}
Let $\H$ be a Heyting algebra. Then ${\sf B}(\H)$ is a Grzegorczyk algebra.
\end{proposition}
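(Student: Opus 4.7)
The plan is to apply the results already developed in Section~\ref{sec:: Grzegorczyk algebras}, in particular Corollary~\ref{cor:: openly generated => Grzegorczyk}, which tells us that any interior algebra generated by its open elements is automatically a Grzegorczyk algebra. So the entire task reduces to verifying that ${\sf B}(\H)$ is generated by its open elements.

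First, I would invoke Theorem~\ref{prop:: McKinsey-Tarski thm}(1) to identify the set of open elements of ${\sf B}(\H)$: we have ${\sf OB}(\H)=\H$, so $H$ is precisely the collection of open elements of ${\sf B}(\H)$. Next, I would apply Theorem~\ref{prop:: McKinsey-Tarski thm}(2) with the choice $\M={\sf B}(\H)$; since trivially $\H\leq {\sf O}({\sf B}(\H))=\H$, this yields that ${\sf B}(\H)$ is isomorphic to the subalgebra of itself generated by $H$. In other words, ${\sf B}(\H)$ is generated (as an interior algebra) by its open elements.

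Finally, Corollary~\ref{cor:: openly generated => Grzegorczyk} then immediately gives that ${\sf B}(\H)$ is a Grzegorczyk algebra, completing the proof.

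There is no real obstacle here: the whole weight of the argument sits in Corollary~\ref{cor:: openly generated => Grzegorczyk}, which was itself derived via Proposition~\ref{thm:: main characterization Grzegorczyk} and Lemma~\ref{fact::stable homo} in the previous section. The role of the present proposition is merely to record that the free Boolean extension construction lands inside the class already characterized there. This is precisely the simplification of Blok's original argument advertised in the introduction.
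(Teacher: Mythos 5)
Your proposal is correct and follows essentially the same route as the paper: both reduce the statement to Corollary~\ref{cor:: openly generated => Grzegorczyk} by using Theorem~\ref{prop:: McKinsey-Tarski thm} to see that ${\sf B}(\H)$ is (isomorphic to) an interior algebra generated by its open elements. You merely spell out in more detail the step the paper states in one line.
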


\begin{proof}
By Theorem \ref{prop:: McKinsey-Tarski thm} the class of interior algebras isomorphic to algebras of the form ${\sf B}(\H)$ coincides with the class of interior algebras generated by its open elements. Thus the assertion  follows from Corollary \ref{cor:: openly generated => Grzegorczyk}.
\end{proof}

Let us now formulate the Blok-Esakia theorem \cite[Theorem 7.10]{Blo76}, \cite[Theorem 7.11]{Esa79a} (see aslo \cite[Section 2]{Bez09}, \cite[Theorem 9.66]{CZ97}, \cite[Section 3]{WZ14}) in algebraic terms.

\begin{B-E-theorem}
The mappings $\sigma$ and $\rho$ are mutually inverse isomorphisms between the lattices $\mathcal L_V(\Hey)$ and $\mathcal L_V(\Grz)$.
\end{B-E-theorem}

The hardness of the algebraic proof of the Blok-Esakia theorem lies in in the fact that not all Grzegorczyk algebras are representable as ${\sf B}(\H)$. The following lemma, doe to Blok, shows how to overcome this difficulty. (The original, rather technical formulation, may be found in \cite[Lemma III.7.6]{Blo76}, \cite[Lemma 2]{WZ14}. Our presentation is simpler in use.)

\begin{B-lemma}\label{lem:: Blok lemma}
Let $\M$ be a Grzegorczyk algebra. Then $\M$ embeds into some elementary extension of ${\sf BO}(\M)$.
\end{B-lemma}

Our formulation follows from the original one. However, for the seek of completeness, we provide its proof in Appendix. Note that it is essentially the same proof. It is just written in a bit different fashion. Here we provide a simple proof for the finite case \cite[Theorem II.2.11]{Blo76}. We failed to find a semantical transparent proof for the general case.

\begin{B-lemma-fc}
Let $\M$ be a finite Grzegorczyk algebra. Then $\M$ is isomorphic to ${\sf BO}(\M)$.
\end{B-lemma-fc}

\begin{proof}
Let us first prove that if $a$ is an element in $M$ and $\mBox a<1$, then either $1$ covers $\mBox a$ or there exists an element $b$ in $M$ such that $\mBox a<\mBox b<1$. (This holds for an arbitrary Grzegorczyk algebras.)  So assume that it is not the case, i.e., that there exists an interval $[\mBox a, 1]$ in $\M$ with more than $2$ elements and with exactly two open elements $\mBox a$ and $1$. We may assume that $\mBox a<a$. Then (Grz) fails in $\M$ for $a$. Indeed, since $\mBox a<a<1$,
\[
\mBox a<\neg a\join \mBox a <1.
\]
Hence $\mBox (a\to\mBox a)=\mBox a$ and
\[
\mBox (\mBox (a\to\mBox a)\to a)=\mBox 1=1\not\leq a.
\]
This and the finiteness of $|M|$ yields that either $\M$ is trivial or has an open coatom. We use this fact to show that there is a maximal chain in $\M$ whose all elements are open. This and the finiteness then imply that $\M$ is generated by open elements.

We proceed by induction on the cardinality of $M$.
If $|M|=1$ then $\M$ has only one element which is open. Thus the thesis holds. So assume that $\M$ is a finite nontrivial Grzegorczyk algebra and the thesis holds for all smaller Grzegorczyk algebras. Let $c$ be an open coatom in $\M$ and let $F=\{1,c\}$. Then $F$ is an open filter. By the induction assumption, there is a maximal chain  $\{a_1/F,\ldots,a_k/F\}$ in $\M/F$ consisting of open elements in $\M/F$. Assume that each $a_i$ is chosen as a minimal element in $a_i/F$, i.e., $a_i\leq c$. Then $a_i/F=\{a_i,a_i\join\neg c\}$ and at least one of this two elements is open. Actually, every $a_i$ is open, since if $a_i\join \neg c$ is open then $\mBox a_i=\mBox((a_i\join \neg c)\meet c)=\mBox(a_i\join \neg c)\meet \mBox c=(a_i\join \neg c)\meet  c=a_i$. Thus $\{1,a_1,\ldots,a_k\}$ is a maximal chain in $\M$ consisting of open elements.
\end{proof}

Let $\mathcal L_U(\Hey)$ and $\mathcal L_U(\Grz)$ be the lattices of all universal classes of Heyting algebras and Grzegorczyk algebras respectively. Let us define two operators on these lattices. For $\U\in \mathcal L_U(\Hey)$ and $\mathcal Y\in \mathcal L_U(\Grz)$ put
\begin{align*}
\rho'&\colon\mathcal L_U(\Grz)\to\mathcal L_U(\Hey);\; \mathcal Y\mapsto\{\sf O(\M)\mid \M\in\mathcal Y\}\\
\sigma'&\colon\mathcal L_U(\Hey)\to\mathcal L_U(\Grz);\;\U\mapsto\sf U(\{B(\H)\mid \H\in \U\}),
\end{align*}
where ${\sf U}=\sf SP_U$. Recall that if $\K$ is a class of algebras in the same signature, then ${\sf SP_U}(\K)$ is a least universal class containing $\K$ \cite[Theorem V.2.20]{BS81}.
Since $\sf O$ commutes with $\sf P_U$ and $\sf S$ (Lemma \ref{lem:: basic properties of B and O}), $\rho(\mathcal Y)$ is an universal class for $\mathcal Y\in \mathcal L_U(\Grz)$. Clearly, thus defined $\rho'$ is an extension of previously defined operator $\rho$ for varieties. Note also that, by Proposition \ref{prop:: B(H) is Grzegorczyk}, $\sigma'(\mathcal Y)\in \mathcal L_U(\Grz)$ for $\mathcal Y\in\mathcal L_U(\Hey)$.
It follows from Lemma \ref{lem:: preservation of quasivarieties} that $\sigma'$ is also an extension of $\sigma$. 

\begin{lemma}\label{lem::easy part of B-E thm}
Let $\U\in\mathcal L_U(\Hey)$. Then $\rho'\sigma'(\U)=\U$. In particular,  $\M\in\sigma'(\U)$ implies ${\sf O}(\M)\in \U$.
\end{lemma}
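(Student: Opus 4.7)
The plan is to unfold the definitions of $\rho'$ and $\sigma'$ and then apply the commutation properties of $\sf O$ with the class operators $\sf S$ and $\sf P_U$ collected in Lemma~\ref{lem:: basic properties of B and O}, combined with the identity ${\sf OB}(\K)=\K$ recalled right after Theorem~\ref{prop:: McKinsey-Tarski thm}.

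First, observe that $\rho'(\mathcal Y)$ is literally ${\sf O}(\mathcal Y)$ in the class-operator notation used earlier, so
\[
\rho'\sigma'(\U)={\sf O}\bigl({\sf SP_U B}(\U)\bigr).
\]
Next, I would chain the commutation identities ${\sf OS}={\sf SO}$ and ${\sf OP_U}={\sf P_U O}$ from Lemma~\ref{lem:: basic properties of B and O}, followed by ${\sf OB}(\U)=\U$, to rewrite
\[
{\sf O}\bigl({\sf SP_U B}(\U)\bigr)={\sf SP_U}\bigl({\sf OB}(\U)\bigr)={\sf SP_U}(\U).
\]
Finally, since $\U$ is a universal class of Heyting algebras and universal classes are exactly the classes closed under $\sf SP_U$ (cf.\ \cite[Theorem V.2.20]{BS81}, as already recalled in the paragraph defining $\sigma'$), one has ${\sf SP_U}(\U)=\U$, which yields the desired equality $\rho'\sigma'(\U)=\U$.

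The "in particular" clause is then immediate: if $\M\in\sigma'(\U)$, then by the definition of $\rho'$ the Heyting algebra ${\sf O}(\M)$ belongs to $\rho'\sigma'(\U)$, which we have just shown equals $\U$.

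I do not expect any genuine obstacle here; the argument is a direct calculation with class operators, and the only nontrivial inputs are the commutation identities from Lemma~\ref{lem:: basic properties of B and O} (the ``first column''), the retraction identity ${\sf OB}(\U)=\U$ from the McKinsey--Tarski theorem, and the standard characterization of universal classes as $\sf SP_U$-closed classes. No use of the Blok lemma or of Proposition~\ref{prop:: B(H) is Grzegorczyk} is needed for this direction; these will be required only for the harder inclusion $\sigma'\rho'(\mathcal Y)=\mathcal Y$ treated later.
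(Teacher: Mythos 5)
Your proposal is correct and follows exactly the same calculation as the paper: the paper's proof is the single chain $\rho'\sigma'(\U)={\sf OSP_UB}(\U)={\sf SP_UOB}(\U)={\sf SP_U}(\U)=\U$, justified by the first-column equalities of Lemma \ref{lem:: basic properties of B and O} and ${\sf OB}(\U)=\U$ from Theorem \ref{prop:: McKinsey-Tarski thm}. Nothing to add.
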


\begin{proof}
By the equalities from Lemma \ref{lem:: basic properties of B and O} and Theorem \ref{prop:: McKinsey-Tarski thm} point (1), we have
\[
\rho'\sigma'(\U)={\sf OSP_UB}(\U)={\sf SP_UOB}(\U)={\sf SP_U}(\U)=\U.
\]
\end{proof}

\begin{lemma}\label{lem:: preservation of quasivarieties}
Let $\K$ be a universal class of Heyting algebras. Then $\K$ is a quasivariety/universal positive class/variety if and only if  $\sigma'(\K)$ is quasivariety/universal positive class/variety respectively. In particular, $\sigma'(\V)=\sigma(\V)$ for every variety $\V$ of Heyting algebras. 
\end{lemma}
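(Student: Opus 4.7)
The plan is to prove a key characterization $\sigma'(\K) = \{\M \in \Grz : {\sf O}(\M) \in \K\}$ valid for every universal class $\K$ of Heyting algebras, and then to derive both implications from it, combined with the commutation identities of Lemma \ref{lem:: basic properties of B and O} and the identity $\K = \rho'\sigma'(\K) = {\sf O}(\sigma'(\K))$ supplied by Lemma \ref{lem::easy part of B-E thm}. The inclusion $\subseteq$ of the characterization is immediate from ${\sf OSP_U} = {\sf SP_UO}$, the fact that $\Grz$ is a variety containing ${\sf B}(\K)$ by Proposition \ref{prop:: B(H) is Grzegorczyk}, and the ${\sf SP_U}$-closedness of $\K$. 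The inclusion $\supseteq$ is the one place where the Blok Lemma is needed: given $\M \in \Grz$ with ${\sf O}(\M) \in \K$, the Blok Lemma embeds $\M$ into some elementary extension $\M^*$ of ${\sf BO}(\M)$, and since every elementary extension embeds into a suitable ultrapower of the base algebra, $\M \in {\sf SP_UB}({\sf O}(\M)) \subseteq \sigma'(\K)$.

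The forward direction is then handled uniformly via this characterization. Assume $\K$ is ${\sf P}$-closed and pick $\M_i \in \sigma'(\K)$: then $\prod\M_i \in \Grz$ (as $\Grz$ is a variety), while ${\sf O}(\prod\M_i) = \prod {\sf O}(\M_i) \in \K$ because each ${\sf O}(\M_i)$ lies in ${\sf O}(\sigma'(\K)) = \K$; the characterization thus places $\prod\M_i$ in $\sigma'(\K)$. Assume $\K$ is ${\sf H}$-closed and take a surjection $f\colon \M \to \N$ with $\M \in \sigma'(\K)$: the restriction of $f$ to ${\sf O}(\M)$ surjects onto ${\sf O}(\N)$ because any open $b = f(a)$ admits the open preimage $\mBox a$, so ${\sf O}(\N) \in {\sf H}({\sf O}(\M)) \subseteq \K$, and once more the characterization gives $\N \in \sigma'(\K)$. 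Since $\sigma'(\K)$ is already ${\sf SP_U}$-closed by construction, adding ${\sf P}$-closedness produces a quasivariety, adding ${\sf H}$-closedness produces a universal positive class, and adding both produces a variety, exhausting the three desired cases.

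The reverse direction follows directly from the commutations ${\sf OH} = {\sf HO}$ and ${\sf OP} = {\sf PO}$ together with $\K = {\sf O}(\sigma'(\K))$: if $\sigma'(\K)$ is ${\sf P}$-closed, then ${\sf P}(\K) = {\sf PO}(\sigma'(\K)) = {\sf OP}(\sigma'(\K)) \subseteq {\sf O}(\sigma'(\K)) = \K$, and the analogous argument handles ${\sf H}$; together with the standing hypothesis that $\K$ is a universal class, this yields the required axiomatic type. The ``in particular'' clause is then a corollary: for a variety $\V$, the forward direction ensures $\sigma'(\V)$ is itself a variety containing ${\sf B}(\V)$, so $\sigma(\V) = {\sf VB}(\V) \subseteq \sigma'(\V)$, while the reverse containment $\sigma'(\V) = {\sf SP_UB}(\V) \subseteq {\sf VB}(\V) = \sigma(\V)$ follows because ultraproducts preserve identities. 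The only real obstacle is the $\supseteq$ half of the characterization, where the Blok Lemma is indispensable: without it one cannot pass from ${\sf O}(\M) \in \K$ to $\M \in {\sf SP_UB}({\sf O}(\M))$, since a typical Grzegorczyk algebra fails to be of the form ${\sf B}(\H)$.
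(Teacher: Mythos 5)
Your proposal is correct and follows essentially the same route as the paper: the backward implication via the commutation identities of Lemma \ref{lem:: basic properties of B and O} together with $\rho'\sigma'(\K)=\K$, and the forward implication by checking closure of $\sigma'(\K)$ under $\sf P$ and $\sf H$ using the Blok lemma to pass from ${\sf O}(\M)\in\K$ back to $\M\in\sigma'(\K)$. The only cosmetic difference is that you isolate this last step as an explicit characterization $\sigma'(\K)=\{\M\in\Grz\mid {\sf O}(\M)\in\K\}$ (justifying the hard inclusion via Frayne's theorem, where the paper can simply note that a universal class, being elementary and closed under subalgebras, contains every algebra embeddable into an elementary extension of one of its members), which the paper uses implicitly.
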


\begin{proof}
Recall first that a class of algebras  closed under isomorphic images is a quasivariety iff it is closed under $\sf S$, $\sf P$ and $\sf P_U$  \cite[Theorem V.2.25]{BS81}, is a universal positive class iff it is closed under $\sf S$, $\sf H$ and $\sf P_U$ \cite[Corollary 2 of Theorem 4 in Section 43]{Gra08},  a variety if  is closed under $\sf S$, $\sf H$ and $\sf P$ \cite[Theorem II.11.9]{BS81}.

Thus the backward implication follows from Lemmas \ref{lem:: basic properties of B and O} and \ref{lem::easy part of B-E thm}.

For the forward implication it is enough to show that if $\K$ is a universal class closed under $\sf P$ or $\sf H$, then $\sigma'(\K)$ is also closed under $\sf P$ or $\sf H$ respectively.

Assume first that $\K$ is closed under the $\sf P$ operator.
Let $\M_i\in \sigma'(\K)$ and $\M=\prod \M_i$. By Lemma \ref{lem::easy part of B-E thm} and the assumption,  ${\sf O}(\M)=\prod {\sf O}(\M_i)\in \K$. Thus by the Blok lemma, $\M\in\sigma'(\K)$.

Assume now that $\K$ is closed under the $\sf H$ operator.
Let $h\colon\N\to\M$, were $\N\in \sigma'(\K)$, be a surjective homomorphism. Let ${\sf O}(h)$ be the restriction of $h$ to the carrier of ${\sf O}(\M)$. Then ${\sf O}(h)\colon{\sf O}(\N)\to{\sf O}(\M)$ is also a surjective homomorphism. By Lemma \ref{lem::easy part of B-E thm}, ${\sf O}(\N)\in\K$, and by the assumption, ${\sf O}(\M)\in\K$. Hence the Blok lemma yields that $\M\in \sigma'(\K)$.
\end{proof}

From now on, we notationally identify $\sigma'$ with $\sigma$ and $\rho'$ with $\rho$.

\begin{B-E-theorem-ex}[\protect{\cite[Theorem 5.5]{Jer09}}]\label{thm:: B-E theorem extended}
The mappings $\sigma$ and $\rho$ are mutually inverse isomorphisms between the lattices $\mathcal L_U(\Hey)$ and $\mathcal L_U(\Grz)$.
\end{B-E-theorem-ex}

\begin{proof}
For $\mathcal Y\in\mathcal L_U(\Grz)$ the inclusion $\mathcal Y\subseteq\sigma\rho\,(\mathcal Y)$ follows from the Blok lemma. The inclusion $\sigma\rho\,(\mathcal Y)\subseteq\mathcal Y$ follows from Theorem \ref{prop:: McKinsey-Tarski thm} point (2) and the fact that $\mathcal Y$ is closed under taking subalgebras.
For $\U\in\mathcal L_U(\Hey)$ the equality $\U=\rho\sigma\,(\mathcal U)$ follows from Lemma \ref{lem::easy part of B-E thm}.
\end{proof}

Let $\mathcal L_Q(\Hey)$/$\mathcal L_{U^+}(\Hey)$ and $\mathcal L_Q(\Grz)$/$\mathcal L_{U^+}(\Grz)$ be lattices of quasivarieties/universal positive classes of Heyting algebras and Grzegorczyk algebras respectively.

\begin{B-E-theorem-res}[\protect{\cite[Theorem 5.5]{Jer09}}]\label{thm:: B-E theorem restricted}
The appropriate restrictions of $\sigma$ and $\rho$ are mutually inverse isomorphisms between the lattices $\mathcal L_U(\Hey)$/$\mathcal L_{U^+}(\Hey)$/$\mathcal L_Q(\Hey)$/$\mathcal L_V(\Hey)$ and the lattices $\mathcal L_U(\Grz)$/$\mathcal L_{U^+}(\Grz)$/$\mathcal L_Q(\Grz)$/$\mathcal L_V(\Grz)$ respectively.
\end{B-E-theorem-res}

\begin{proof}
It follows from the Blok-Esakia theorem extended version and Lemma \ref{lem:: preservation of quasivarieties}.
\end{proof}

\section{Strong structural completeness}

Structural completeness and universal completeness are  well established properties for single-conclusion consequence relations and quasivarieties. However its extension to multi-conclusion consequence relations and to universal classes is recent \cite{Iem15}. Here we focus on a connected  new properties of strong structural completeness and strong universal completeness. For dealing algebraically with the structural and universal completenesses for mcrs we need to introduce a new notion of free families. It is done in a separate paper \cite{Str16}.    

We prove that the Blok-Esakia isomorphism for universal preserves  and reflects strong structural and universal completenesses. For (quasi)-varieties strong versions are equivalent to standard ones. For varieties  preservation and reflection of structural completeness was proved in \cite[Theorem 5.4.7]{Ryb97}. In our opinion, the proof presented here is simpler and more general.

For an mcr $\vdash$ and an inference rule $r$ let $\vdash_r$ be a least mcr extending $\vdash$ and containing $r$.
An inference rule $r$ is \emph{weakly admissible} for $\vdash$ if 
\[
\{\psi\in Form\mid \0\vdash\{\psi\}\}=\{\psi\mid \0\vdash_r\{\psi\}\},
\]
and \emph{admissible} for $\vdash$ if 
\[
\{\Delta \text{ a finite subset of }Form  \mid \0\vdash\Delta\}=\{\Delta\mid \0\vdash_r\Delta\}.
\]
In other words, $r$ is (weakly) admissible if $\vdash$ and $\vdash_r$ share the same (multi-)theorems.

A mcr is \emph{structurally/universally complete} if the sets of its admissible and derivable single-conclusion/multi-conclusion inference rules coincide. And a mcr is \emph{strongly structurally/universally complete} if the sets of its weakly admissible and derivable single/multi-conclusion inference rules coincide.

Let us summarize these properties in the following table. 

\begin{center}
\begin{tabular}{ |c| c| c| }
\hline
  & weakly admissible & admissible \\ 
 \hline
 single-conclusion & strong structural completeness & structural completeness \\  
 \hline
 multi-conclusion & strong universal completeness & universal completeness \\  
\hline
\end{tabular}
\end{center}

\mbox{}

All these notions have algebraic counterparts. 
A universal disjunctive sentence $v$ is \emph{weakly admissible} for a universal class $\U$ if the sets of identities satisfied in $\U$ and in the class of all algebras from $\U$ satisfying $v$ coincide. And $v$ is \emph{admissible} for $\U$ if the sets of positive universal disjunctive sentences satisfied in $\U$ and in the class of all algebras from $\U$ satisfying $v$ coincide.

 Analogically, a universal class $\U$ is \emph{{\rm (}strongly{\rm )}  structurally complete} if the set of (weakly) admissible for $\U$ quasi-identities and the set of quasi-identities which hold in $\U$  coincide.
And $\U$ is \emph{{\rm (}strongly{\rm )} universally complete} if the set of (weakly) admissible for $\U$ disjunctive universal sentences and the set of disjunctive universal sentences which hold in $\U$  coincide. The following fact follows from Theorem \ref{thm:: completeness}.

\begin{fact}
Let $\vdash$ be a modal or intermediate mcr, $\U$ be a universal class of modal or Heyting algebras respectively, and let $r$ be an inference rule in the language of $\vdash$. Assume that $\vdash$ and $\U$ correspond to each other (in the sense of Theorem \ref{thm:: completeness}). Then $r$ is (weakly) admissible for $\vdash$ if and only if its translation ${\sf T}(r)$ is (weakly) admissible for $\U$. Consequently, $\vdash$ is (strongly) structurally/universally complete if and only if $\U$ is (strongly) structurally/universally complete. 
\end{fact}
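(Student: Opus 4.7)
The plan is a direct translation argument using Theorem~\ref{thm:: completeness}. Throughout, write $\U_r=\U\cap\Mod({\sf T}(r))$ for the universal class obtained by imposing the translation of $r$ on $\U$.

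First I would observe that if $\vdash$ corresponds to $\U$, then $\vdash_r$ corresponds to $\U_r$. Indeed, if $R$ axiomatizes $\vdash$ then $R\cup\{r\}$ axiomatizes $\vdash_r$, so by Theorem~\ref{thm:: completeness} the universal class corresponding to $\vdash_r$ is axiomatized by ${\sf T}(R)\cup\{{\sf T}(r)\}$, which is exactly $\U_r$. Second, I would check how theorems and multi-theorems translate under ${\sf T}$. By Theorem~\ref{thm:: completeness}, $\0\vdash\{\psi\}$ iff $\U\models(\forall\bar x)\,\psi\=1$, i.e.\ iff $\psi\=1$ is an identity of $\U$; and $\0\vdash\{\psi_1,\dots,\psi_n\}$ iff $\U$ satisfies the disjunctive universal positive sentence $(\forall\bar x)[\psi_1\=1\sqcup\cdots\sqcup\psi_n\=1]$.

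The desired equivalences then fall out. The rule $r$ is weakly admissible for $\vdash$ iff $\vdash$ and $\vdash_r$ have the same theorems iff, applying the previous step to both $\U$ and $\U_r$, the identities valid in $\U$ coincide with those valid in $\U_r$; this is exactly the condition that ${\sf T}(r)$ is weakly admissible for $\U$. The admissibility case is identical with ``identity'' replaced by ``disjunctive universal positive sentence'' throughout, using the second clause above.

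For the ``consequently'' part I would invoke the remark after Theorem~\ref{thm:: completeness}: every disjunctive universal sentence in the appropriate signature is equivalent (over all Heyting resp.\ modal algebras) to ${\sf T}(r)$ for some rule $r$, and single-conclusion $r$ produce precisely the quasi-identities. Combined with Theorem~\ref{thm:: completeness}, this means that ``derivable rule of $\vdash$'' translates bijectively to ``sentence valid in $\U$'', at each of the two levels (single- and multi-conclusion, hence quasi-identities versus all disjunctive universal sentences). Pairing this with the first half of the fact yields the four equivalences: $\vdash$ is (strongly) structurally complete iff $\U$ is (strongly) structurally complete, and likewise for universal completeness. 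I do not foresee a real obstacle: once the correspondence $\vdash_r\leftrightarrow\U_r$ is in hand, the whole argument is bookkeeping between logical and algebraic vocabulary, relying on the injectivity and essential surjectivity of the assignment $\vdash\mapsto\U_{\,\vdash}$ recorded in the paper right after Theorem~\ref{thm:: completeness}.
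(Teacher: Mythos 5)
Your proposal is correct and follows exactly the route the paper intends: the paper states this Fact without a written proof, remarking only that it ``follows from Theorem \ref{thm:: completeness}'', and your argument --- passing from $\vdash_r$ to $\U\cap\Mod({\sf T}(r))$, matching (multi-)theorems with identities resp.\ disjunctive universal positive sentences, and invoking the bijectivity of $r\mapsto{\sf T}(r)$ up to equivalence --- is precisely the bookkeeping that remark suppresses. No gaps.
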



We recall the notion of free algebras. Let $V$ be a set. Let $\Fr$ be an algebra such that $V\subseteq F$ and $\K$ be an arbitrary class of algebra in the same signature as $\Fr$. We say that $\Fr$ has \emph{the universal mapping property for $\K$ over $V$} if for every algebra $\A\in \K$ and every mapping $f\colon V\to A$ there exists a unique homomorphism $f'\colon \Fr\to \A$ such that $f'|_V=f$. Clearly, such algebra $\Fr$ is not uniquely determined. Moreover, it does not need to be the case that $\Fr\in\K$. Therefore additional condition is considered. An algebra $\Fr$ is \emph{free for $\K$ over $V$} if it has the universal mapping property for $\K$ over $V$ and $\Fr$ belongs to the variety generated by $\K$. Then for every nonempty set $V$ and every class $\K$ of algebras in the same language containing a nontrivial algebra there exists a free algebra $\Fr$ for $\K$ over $V$. Moreover, an algebra $\Fr$ is uniquely determined up to isomorphism whose restriction to $V$ is an identity.


For a class $\K$ of algebras in a fixed language, we write ${\sf Q}(\K)$ and ${\sf V}(\K)$ to denote a least quasivariety and a least variety respectively containing $\K$. In case when $\K$ consist of one algebra $\bf A$ we also write ${\sf Q}({\bf A})$ and ${\sf V}({\bf A})$.
Let us summarize the needed properties of free algebras in the following fact.

\begin{fact}\label{fact:: free algebras}
Let $\Fr$ be free for $\K$ over a nonempty set $V$. Then
\begin{enumerate}
\item $\Fr\in{\sf Q}(\K)$ {\rm (}\cite[Theorem II.10.12]{BS81}{\rm )};
\item $\Fr$ is also free over $V$ for ${\sf V}(\K)$ {\rm (}\cite[Corollary II.11.10]{BS81}, see also \cite[Proposition 4.8.9]{Kra99} for a more direct argument{\rm )};
\item if $V$ is infinite and $(\forall\bar x)\,e(\bar x)$ is an identity, then $\K\models (\forall\bar x)e(\bar x)$ if and only if $\Fr\models (\forall\bar x)\,e(\bar x)$ if and only if $\Fr\models e(\bar v)$, where $\bar v$ is a tuple of mutually distinct elements from $V$ of the same length as $\bar x$.  
  {\rm (}\cite[Theorem II.11.4]{BS81}{\rm )}.
\end{enumerate}
\end{fact}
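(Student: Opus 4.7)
The plan is to reduce all three items to the universal mapping property together with the assumption $\Fr \in {\sf V}(\K)$. A preliminary remark is that the u.m.p.\ forces $V$ to generate $\Fr$: otherwise the inclusion $V \hookrightarrow F$ would admit two distinct extensions $\Fr \to \Fr$, violating the uniqueness clause.

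For item (1), I would embed $\Fr$ into a product of members of $\K$. For every pair $(\A, f)$ with $\A \in \K$ and $f\colon V \to A$, the u.m.p.\ yields $\bar f \colon \Fr \to \A$; assemble these into a homomorphism $\phi \colon \Fr \to \prod_{(\A, f)} \A$. To verify injectivity, take $a \neq b$ in $\Fr$ and write them as $a = t^{\Fr}(\bar v)$, $b = s^{\Fr}(\bar v)$ over a common tuple of distinct generators $\bar v = (v_1, \ldots, v_n)$ from $V$ (possible since $V$ generates $\Fr$ and every element involves only finitely many generators). If $\phi(a) = \phi(b)$, then as $f$ ranges over all maps $V \to A$ the tuple $f(\bar v)$ sweeps out all of $A^n$, so $\K \models t(\bar x) \approx s(\bar x)$; since $\Fr \in {\sf V}(\K)$, this identity transfers to $\Fr$, forcing $a = b$, a contradiction. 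Hence $\Fr \in {\sf ISP}(\K) \subseteq {\sf Q}(\K)$.

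For item (2), I would verify the u.m.p.\ for ${\sf V}(\K)$ over $V$ via Birkhoff's theorem ${\sf V}(\K) = {\sf HSP}(\K)$. Given $\A \in {\sf V}(\K)$, fix a presentation $\A = h(\B)$ with $\B \leq \prod_i \A_i$, $\A_i \in \K$, and $h$ surjective. For $f\colon V \to A$, choose (using the Axiom of Choice) a lift $g \colon V \to B$ of $f$ along $h$ and regard $g$ as a map into $\prod_i \A_i$ via the inclusion. The componentwise u.m.p.\ of $\Fr$ for $\K$ assembles the maps $\pi_i \circ g$ into a single homomorphism $\tilde g \colon \Fr \to \prod_i \A_i$ whose image is generated by $\tilde g(V) = g(V) \subseteq B$, hence lies in $\B$; composing with $h$ yields the required extension $\bar f \colon \Fr \to \A$. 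Uniqueness is automatic since $V$ generates $\Fr$. The main subtleties here are the appeal to Choice and the verification that the image lands inside $\B$.

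For item (3), the forward chain $\K \models e \Rightarrow {\sf V}(\K) \models e \Rightarrow \Fr \models e \Rightarrow \Fr \models e(\bar v)$ is immediate (using item (1) for the middle step). The nontrivial direction is: if $\Fr \models e(\bar v)$ for some distinct tuple $\bar v = (v_1, \ldots, v_n)$ in $V$, then $\K \models e$. Fix $\A \in \K$ and an arbitrary assignment $g$ of the variables $x_1, \ldots, x_n$ of $e$ in $\A$. Extend the partial map $v_i \mapsto g(x_i)$ to a function $f \colon V \to A$ (legal precisely because the $v_i$ are distinct), invoke the u.m.p.\ to get $\bar f \colon \Fr \to \A$, and apply $\bar f$ to the equation $e(\bar v)$ valid in $\Fr$ to obtain $e(g(x_1), \ldots, g(x_n))$ in $\A$. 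Since $g$ and $\A$ were arbitrary, $\K \models e$. The infinitude of $V$ enters exactly here, to guarantee enough distinct $\bar v$ for identities of any prescribed arity.
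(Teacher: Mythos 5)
The paper does not actually prove this Fact --- it is imported wholesale from Burris--Sankappanavar (and Kracht), so there is no in-text argument to compare yours against. What you have written is, in substance, the standard textbook proof behind those citations: (1) is the separation-of-points embedding into a product $\prod_{(\A,f)}\A$, (2) is the lifting argument through ${\sf HSP}(\K)$, and (3) is the instantiation/substitution argument. All three are correct in outline, modulo the routine repair in (1) that the index ranges over a proper class of pairs $(\A,f)$; one should either restrict to a set of pairs sufficient to separate any two elements of $F$, or phrase the argument as factoring by an intersection of congruences as in \cite[Theorem II.10.12]{BS81}.

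The one step that does not hold up is your preliminary remark. You justify ``$V$ generates $\Fr$'' by claiming that otherwise the inclusion $V\hookrightarrow F$ would admit two distinct extensions to a homomorphism $\Fr\to\Fr$. This fails for two reasons: the uniqueness clause of the universal mapping property governs only homomorphisms into members of $\K$, and $\Fr$ itself need not belong to $\K$ (typically it does not); and even granting that, you have not produced a second extension --- if $V$ fails to generate $\Fr$, nothing guarantees a nonidentity endomorphism fixing $V$. This matters because generation is load-bearing in all three of your arguments: you need it to write $a=t^{\Fr}(\bar v)$ in (1), and to get uniqueness of $\bar f$ and the containment $\tilde g(\Fr)\subseteq B$ in (2). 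The clean fix is to build generation of $\Fr$ by $V$ into the definition of the universal mapping property, which is exactly how \cite{BS81} sets things up and what the paper's own assertion that $\Fr$ is unique up to an isomorphism fixing $V$ tacitly presupposes. With that convention adopted, the rest of your proposal goes through.
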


For a sentence $\varphi$ let us denote by $\Mod(\varphi)$ the class of all algebras, in the same fixed language as $\varphi$, satisfying $\varphi$. 

\begin{proposition}\label{prop:: w-admissibility}
Let $\U$ be a universal class, $\Fr$ be a free algebra for $\U$ over an infinite set, and $v$ be a universal sentence. Then the following conditions are equivalent
\begin{enumerate}
\item  $v$ is weakly admissible for $\U$;
\item ${\sf V}(\U\cap\Mod(v))={\sf V}(\U)$;
\item $\Fr\in {\sf Q}(\U\cap\Mod(v))$. 
\end{enumerate}
\end{proposition}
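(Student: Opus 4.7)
My plan is to prove (1)$\Leftrightarrow$(2) directly from the definitions and the fact that varieties are determined by their identities, then use the defining universal mapping property of $\Fr$ together with Fact~\ref{fact:: free algebras} to establish (2)$\Leftrightarrow$(3).

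For (1)$\Leftrightarrow$(2), I would note the trivial inclusion ${\sf V}(\U\cap\Mod(v))\subseteq{\sf V}(\U)$, so every identity satisfied in $\U$ is already satisfied in $\U\cap\Mod(v)$. Hence weak admissibility of $v$ reduces to the converse: each identity satisfied in $\U\cap\Mod(v)$ must also hold in $\U$. Since a variety is uniquely determined by the set of identities it satisfies, this converse is equivalent to the equality ${\sf V}(\U\cap\Mod(v))={\sf V}(\U)$.

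For the direction (2)$\Rightarrow$(3), the key observation is that, under the hypothesis of (2), the algebra $\Fr$ is actually free for $\U\cap\Mod(v)$ over $V$. The universal mapping property is inherited: for $\A\in\U\cap\Mod(v)\subseteq\U$ any map $V\to A$ extends uniquely through $\Fr$ by freeness for $\U$. Moreover $\Fr\in{\sf V}(\U)={\sf V}(\U\cap\Mod(v))$ by Fact~\ref{fact:: free algebras}(2) and the hypothesis, so $\Fr$ does lie in the variety generated by $\U\cap\Mod(v)$. Applying Fact~\ref{fact:: free algebras}(1) to the class $\U\cap\Mod(v)$ then gives $\Fr\in{\sf Q}(\U\cap\Mod(v))$.

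For (3)$\Rightarrow$(2), I would show that $\Fr$ generates the variety ${\sf V}(\U)$, so that its quasivariety membership in (3) transports the variety equality. Concretely, $\Fr$ is free for ${\sf V}(\U)$ over the infinite set $V$ by Fact~\ref{fact:: free algebras}(2), and by Fact~\ref{fact:: free algebras}(3) every identity true in $\Fr$ is true in ${\sf V}(\U)$; combined with $\Fr\in{\sf V}(\U)$ this yields ${\sf V}(\Fr)={\sf V}(\U)$. Then (3) forces ${\sf V}(\U)={\sf V}(\Fr)\subseteq{\sf V}(\U\cap\Mod(v))$, while the reverse inclusion is automatic, giving (2). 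I do not anticipate a genuine obstacle; the only delicate point is making sure that in (2)$\Rightarrow$(3) we really do land in the quasivariety ${\sf Q}(\U\cap\Mod(v))$ rather than merely in the variety, and this is exactly what the free-algebra argument via Fact~\ref{fact:: free algebras}(1) provides.
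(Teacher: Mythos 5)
Your proof is correct and follows essentially the same route as the paper's: (1)$\Leftrightarrow$(2) comes directly from the definition of weak admissibility together with the trivial inclusion ${\sf V}(\U\cap\Mod(v))\subseteq{\sf V}(\U)$, and (2)$\Leftrightarrow$(3) is obtained by noting that $\Fr$ inherits the universal mapping property for the subclass $\U\cap\Mod(v)$ and then invoking Fact~\ref{fact:: free algebras}. Splitting (2)$\Leftrightarrow$(3) into two implications rather than a chain of equivalences is only a stylistic difference.
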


\begin{proof}
The definition of weak admissibility may be stated as
\[
\{e\text{ is an identity}\mid \U\cap \Mod(v)\models e\}\subseteq \{e\text{ is an identity}\mid \U\models e\}.
\]
This shows that (1) is equivalent to (2). By Fakt \ref{fact:: free algebras} point (3), the condition (2) is equivalent to
\[
\Fr\in{\sf V}(\U\cap \Mod(v)).
\]
Since $\Fr$ is free for $\U$, it has the universal mapping property for $\U\cap\Mod(v)$. Thus the last condition is equivalent to $\Fr$ being free for $\U\cap\Mod(v)$. Thus Fact \ref{fact:: free algebras} point (1) yields that it is equivalent to (3).
\end{proof}


\begin{proposition}\label{prop:: SSC}
Let $\U$ be a universal class and $\Fr$ be a free algebra for $\U$ over an infinite set. Then the following conditions are equivalent
\begin{enumerate}
\item $\U$ is strongly structurally complete;
\item for every quasi-identity $q$
\[
\Fr\in {\sf Q}(\U\cap\Mod(q)) \text{ yields } \U\models q;
\]
\item for every subquasivariety $\Q$ of ${\sf Q}(\U)$
\[
\Fr\in {\sf Q}(\U\cap\Q) \text{ yields } \U\subseteq \Q.
\]
\end{enumerate}
\end{proposition}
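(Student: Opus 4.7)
The strategy is to use the preceding Proposition \ref{prop:: w-admissibility} (applied to quasi-identities $q$) as a bridge: it identifies weak admissibility of $q$ for $\U$ with the membership $\Fr\in {\sf Q}(\U\cap\Mod(q))$. Once this is in place, the equivalence (1)$\Leftrightarrow$(2) is essentially a rewriting of the definition of strong structural completeness, and (2)$\Leftrightarrow$(3) is a routine unpacking of a quasivariety as the class of models of its quasi-identities.

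For (1)$\Leftrightarrow$(2), I would first recall that by definition $\U$ is strongly structurally complete if and only if every weakly admissible for $\U$ quasi-identity $q$ already satisfies $\U\models q$ (the reverse implication being automatic). By Proposition \ref{prop:: w-admissibility} (condition (3)), the premise ``$q$ is weakly admissible for $\U$'' is the same as ``$\Fr\in{\sf Q}(\U\cap\Mod(q))$'', which gives (2) verbatim.

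For (2)$\Rightarrow$(3), I would take a subquasivariety $\Q$ of ${\sf Q}(\U)$ with $\Fr\in{\sf Q}(\U\cap\Q)$ and show $\U\subseteq\Q$. Since $\Q$ is axiomatized (relative to the ambient signature) by some set of quasi-identities, it suffices to check that every quasi-identity $q$ valid in $\Q$ is valid in $\U$. From $\Q\subseteq\Mod(q)$ we get $\U\cap\Q\subseteq\U\cap\Mod(q)$, hence ${\sf Q}(\U\cap\Q)\subseteq{\sf Q}(\U\cap\Mod(q))$, so $\Fr\in{\sf Q}(\U\cap\Mod(q))$ and (2) delivers $\U\models q$. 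For (3)$\Rightarrow$(2), given a quasi-identity $q$ with $\Fr\in{\sf Q}(\U\cap\Mod(q))$, I would set $\Q={\sf Q}(\U\cap\Mod(q))$, which is a subquasivariety of ${\sf Q}(\U)$ because $\U\cap\Mod(q)\subseteq\U$. Then $\U\cap\Mod(q)\subseteq\U\cap\Q$, so $\Fr\in{\sf Q}(\U\cap\Mod(q))\subseteq{\sf Q}(\U\cap\Q)$, and (3) yields $\U\subseteq\Q\subseteq\Mod(q)$, i.e.\ $\U\models q$.

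I do not anticipate a substantial obstacle: the only nontrivial ingredient, namely the characterization of weak admissibility via free algebras, is already established in Proposition \ref{prop:: w-admissibility}, and the rest is bookkeeping with quasivariety closure. The mildest care is required in the step (3)$\Rightarrow$(2) to ensure that the quasivariety $\Q$ chosen is a subquasivariety of ${\sf Q}(\U)$, which is immediate from $\U\cap\Mod(q)\subseteq\U$.
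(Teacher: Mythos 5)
Your proposal is correct and follows essentially the same route as the paper: both reduce (1)$\Leftrightarrow$(2) to Proposition \ref{prop:: w-admissibility}, prove (2)$\Rightarrow$(3) by running condition (2) over the set of quasi-identities axiomatizing $\Q$, and prove (3)$\Rightarrow$(2) by specializing to a concrete quasivariety built from $\Mod(q)$. The only (immaterial) difference is that in (3)$\Rightarrow$(2) you take $\Q={\sf Q}(\U\cap\Mod(q))$ where the paper takes $\Q={\sf Q}(\U)\cap\Mod(q)$; both choices work for the same reasons.
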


\begin{proof}
The equivalence (1)$\Leftrightarrow$(2) follows directly from Proposition \ref{prop:: w-admissibility} and the definition of strong structural completeness.

The implication (3)$\Rightarrow$(2) may be obtained by considering the quasivariety $\Q={\sf Q}(\U)\cap \Mod(q)$. 

For the implication (2)$\Rightarrow$(3), let us assume that $\Fr\in {\sf Q}(\U\cap\Q)$. Let $\Sigma$ be the set of all quasi-identities satisfied in $\Q$. Then for every $q\in \Sigma$, we have $\Fr\in {\sf Q}(\U\cap\Mod(q))$. Thus (2) yields 
$
\U\subseteq \bigcap_{q\in\Sigma}\Mod(q)=\Q
$.  
\end{proof}

Similarly, one may prove the following fact.

\begin{proposition}\label{prop:: SUC}
Let $\U$ be a universal class and $\Fr$ be a free algebra for $\U$ over an infinite set. Then the following conditions are equivalent
\begin{enumerate}
\item $\U$ is strongly universally complete;
\item for every disjunctive universal sentence $v$
\[
\Fr\in {\sf Q}(\U\cap\Mod(v)) \text{ yields } \U\models v;
\]
\item for every  universal  subclass $\mathcal W$ of $\U$
\[
\Fr\in {\sf Q}(\mathcal W) \text{ yields } \U=\mathcal W.
\]
\end{enumerate}
\end{proposition}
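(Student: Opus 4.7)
The plan is to mirror the proof of Proposition \ref{prop:: SSC}, substituting arbitrary disjunctive universal sentences for quasi-identities and universal subclasses for subquasivarieties. Note that the characterization of weak admissibility in Proposition \ref{prop:: w-admissibility} is stated in terms of the quasivariety operator $\sf Q$, so even in the universal-completeness setting the relevant membership condition on $\Fr$ still uses $\sf Q$; this is what makes the three statements formally parallel to the structurally complete case.

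For (1)$\Leftrightarrow$(2), I would unfold the definition: $\U$ is strongly universally complete precisely when every disjunctive universal sentence $v$ that is weakly admissible for $\U$ is already satisfied in $\U$. Proposition \ref{prop:: w-admissibility} then translates weak admissibility of $v$ into the condition $\Fr\in{\sf Q}(\U\cap\Mod(v))$, which gives the equivalence at once.

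For (3)$\Rightarrow$(2), given a disjunctive universal sentence $v$ with $\Fr\in{\sf Q}(\U\cap\Mod(v))$, I would take $\mathcal W=\U\cap\Mod(v)$. This is a universal subclass of $\U$ (intersections of universal classes are universal), and the hypothesis of (3) is satisfied, so (3) yields $\U=\mathcal W\subseteq\Mod(v)$, that is, $\U\models v$. For the converse (2)$\Rightarrow$(3), let $\mathcal W$ be a universal subclass of $\U$ with $\Fr\in{\sf Q}(\mathcal W)$, and let $\Sigma$ be the set of all disjunctive universal sentences satisfied in $\mathcal W$; since $\mathcal W$ is universal, $\Sigma$ axiomatizes it. For each $v\in\Sigma$, the inclusion $\mathcal W\subseteq\U\cap\Mod(v)$ yields $\Fr\in{\sf Q}(\mathcal W)\subseteq{\sf Q}(\U\cap\Mod(v))$, whence (2) gives $\U\models v$. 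Thus $\U$ satisfies every sentence in $\Sigma$, so $\U\subseteq\mathcal W$, and combined with $\mathcal W\subseteq\U$ we conclude $\U=\mathcal W$.

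There is no real obstacle here — the author explicitly signals that the argument is analogous. The only point worth being careful about is to keep straight that both (2) and the hypothesis in (3) still use ${\sf Q}(\cdot)$ rather than the universal-class operator ${\sf SP_U}$, because this is what Proposition \ref{prop:: w-admissibility} delivers, and to observe that a universal subclass of $\U$ is automatically axiomatised by its disjunctive universal theory, which is what makes the final intersection step in (2)$\Rightarrow$(3) go through.
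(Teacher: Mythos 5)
Your proof is correct and is exactly the argument the paper intends: the text proves Proposition \ref{prop:: SUC} only by the remark that it goes ``similarly'' to Proposition \ref{prop:: SSC}, and your write-up is precisely that analogue, with $\U\cap\Mod(v)$ playing the role of the quasivariety ${\sf Q}(\U)\cap\Mod(q)$ and the disjunctive universal theory $\Sigma$ of $\mathcal W$ replacing the set of quasi-identities. Your observation that the hypothesis on $\Fr$ still uses ${\sf Q}(\cdot)$, because that is what Proposition \ref{prop:: w-admissibility} supplies, is the right point to be careful about and is handled correctly.
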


\begin{lemma}\label{lem:: sigmaV=VB sigmaQ=QB}
Let $\K$ be a class of Heyting algebras. Then
\[
\sigma{\sf V}(\K)={\sf VB}(\K)\quad\text{and}\quad\sigma{\sf Q}(\K)={\sf QB}(\K).
\]
Hence, if $\U$ is a universal class of Heyting algebras, then
\[
\sigma{\sf V}(\U)={\sf V}\sigma(\U)\quad\text{and}\quad\sigma{\sf Q}(\U)={\sf Q}\sigma(\U).
\]

\end{lemma}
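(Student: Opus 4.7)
The plan is to prove the two displayed equalities by manipulating the class-operator identities and inclusions recorded in Lemma \ref{lem:: basic properties of B and O}, and then to derive the ``hence'' statement by invoking the elementary absorption identities ${\sf V}{\sf SP_U}={\sf V}$ and ${\sf Q}{\sf SP_U}={\sf Q}$. The main (only) obstacle is keeping the bookkeeping of class operators tight; there is no deeper content to extract.

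For the first equality, I would observe that since ${\sf V}(\K)$ is a variety, the definition of $\sigma$ on varieties gives $\sigma{\sf V}(\K)={\sf VB}({\sf V}(\K))$. The inclusion ${\sf VB}(\K)\subseteq{\sf VB}({\sf V}(\K))$ is immediate from $\K\subseteq{\sf V}(\K)$. For the reverse inclusion, combine Birkhoff's theorem ${\sf V}={\sf HSP}$ with the identities/inclusions from Lemma \ref{lem:: basic properties of B and O} to chain
\[
{\sf B}{\sf V}={\sf B}{\sf HSP}={\sf HBSP}={\sf HSBP}\subseteq{\sf HSSPB}={\sf HSPB}={\sf VB},
\]
so that ${\sf VB}({\sf V}(\K))\subseteq{\sf VVB}(\K)={\sf VB}(\K)$.

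For the second equality, since ${\sf Q}(\K)$ is a quasivariety, Lemma \ref{lem:: preservation of quasivarieties} ensures that $\sigma({\sf Q}(\K))$ is again a quasivariety, and hence coincides with ${\sf QB}({\sf Q}(\K))$. So it is enough to show ${\sf QB}({\sf Q}(\K))\subseteq{\sf QB}(\K)$. Using Mal'cev's characterization ${\sf Q}={\sf ISPP_U}$ together with Lemma \ref{lem:: basic properties of B and O} and the standard fact ${\sf PS}\subseteq{\sf SP}$, I would produce the chain
\[
{\sf BQ}={\sf BISPP_U}={\sf ISBPP_U}\subseteq{\sf ISSPBP_U}={\sf ISPBP_U}\subseteq{\sf ISPSP_U B}\subseteq{\sf ISPP_U B}={\sf QB},
\]
whence ${\sf QB}({\sf Q}(\K))\subseteq{\sf QQB}(\K)={\sf QB}(\K)$.

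Finally, for the ``hence'' part, let $\U$ be a universal class of Heyting algebras. By the two equalities just proved, $\sigma{\sf V}(\U)={\sf VB}(\U)$ and $\sigma{\sf Q}(\U)={\sf QB}(\U)$. On the other hand, by the universal-class definition $\sigma(\U)={\sf SP_U B}(\U)$; since ${\sf SP_U}(\mathcal X)\subseteq{\sf Q}(\mathcal X)\subseteq{\sf V}(\mathcal X)$ and $\mathcal X\subseteq{\sf SP_U}(\mathcal X)$ for any class $\mathcal X$, one has ${\sf V}{\sf SP_U}={\sf V}$ and ${\sf Q}{\sf SP_U}={\sf Q}$ as class operators. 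Applying these to ${\sf B}(\U)$ gives
\[
{\sf V}\sigma(\U)={\sf V}{\sf SP_U B}(\U)={\sf VB}(\U)=\sigma{\sf V}(\U),\qquad {\sf Q}\sigma(\U)={\sf Q}{\sf SP_U B}(\U)={\sf QB}(\U)=\sigma{\sf Q}(\U),
\]
which completes the argument.
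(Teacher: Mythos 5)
Your proof is correct and follows essentially the same route as the paper: the inclusion $\sigma{\sf V}(\K)\subseteq{\sf VB}(\K)$ (resp.\ $\sigma{\sf Q}(\K)\subseteq{\sf QB}(\K)$) via the commutation identities of Lemma \ref{lem:: basic properties of B and O} applied to ${\sf V}={\sf HSP}$ and ${\sf Q}={\sf SPP_U}$, and the reverse inclusion from monotonicity together with Lemma \ref{lem:: preservation of quasivarieties}. The operator chains you write out are just a more explicit version of what the paper leaves implicit.
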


\begin{proof}
Recall that ${\sf V}(\K)={\sf HSP}(\K)$ and ${\sf Q}(\K)={\sf SPP_U}(\K)$ \cite[Theorems II.9.5
and V.2.25]{BS81}. Thus the inclusions $\sigma{\sf V}(\K)\subseteq{\sf VB}(\K)$ and $\sigma{\sf Q}(\K)\subseteq{\sf QB}(\K)$ follows from Lemma \ref{lem:: basic properties of B and O}. By Lemma \ref{lem:: preservation of quasivarieties}, $\sigma{\sf V}(\K)$ is a variety and $\sigma{\sf Q}(\K)$ is a quasivariety both containing ${\sf B}(\K)$. Thus the opposite inclusion follows. 
\end{proof}

The following lemma, in the case when $\U$ is the class of all Heyting algebra, for was proved in \cite[Theorem 3.16]{MT46}

\begin{lemma}\label{lem:: free Heyting algebras}
Let $\U$ be a universal class of Heyting algebra, $\Fr$ be a free algebra for $\U$ over $V$ and $\Fr_\sigma$ be a free algebra for $\sigma(\U)$ also over $V$. Then ${\sf B}(\Fr)$ embeds into $\Fr_\sigma$.
\end{lemma}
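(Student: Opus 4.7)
The plan is to build $\bar\phi\colon{\sf B}(\Fr)\to\Fr_\sigma$ from the G\"odel-style assignment $v\mapsto\mBox v$, and then prove injectivity by exhibiting a one-sided inverse $\psi\colon\Fr_\sigma\to{\sf B}(\Fr)$. First I check that ${\sf O}(\Fr_\sigma)$ lies in ${\sf V}(\U)$: Fact~\ref{fact:: free algebras}(1) gives $\Fr_\sigma\in{\sf Q}\sigma(\U)\subseteq{\sf V}\sigma(\U)$, and then the commutativity of ${\sf O}$ with ${\sf H}$, ${\sf S}$ and ${\sf P}$ from Lemma~\ref{lem:: basic properties of B and O}, combined with the identity ${\sf O}\sigma(\U)=\U$ from Lemma~\ref{lem::easy part of B-E thm}, yields ${\sf O}(\Fr_\sigma)\in{\sf O}{\sf V}\sigma(\U)={\sf V}{\sf O}\sigma(\U)={\sf V}(\U)$. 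Consequently, by Fact~\ref{fact:: free algebras}(2), the assignment $v\mapsto\mBox v$ extends to a Heyting algebra homomorphism $\phi\colon\Fr\to{\sf O}(\Fr_\sigma)$, and Theorem~\ref{prop:: McKinsey-Tarski thm}(3) extends $\phi$ uniquely to an interior algebra homomorphism $\bar\phi\colon{\sf B}(\Fr)\to\Fr_\sigma$.

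For the inverse direction I need ${\sf B}(\Fr)\in{\sf V}\sigma(\U)$. Chaining the relations of Lemma~\ref{lem:: basic properties of B and O} in the pattern ${\sf BHSP}\subseteq{\sf HBSP}={\sf HSBP}\subseteq{\sf HSSPB}={\sf HSPB}$, one obtains ${\sf B}{\sf V}(\K)\subseteq{\sf V}{\sf B}(\K)$ for every class $\K$, so that $\Fr\in{\sf V}(\U)$ gives ${\sf B}(\Fr)\in{\sf B}{\sf V}(\U)\subseteq{\sf V}{\sf B}(\U)={\sf V}\sigma(\U)$, the last equality being part of Lemma~\ref{lem:: sigmaV=VB sigmaQ=QB}. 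By Fact~\ref{fact:: free algebras}(2) the universal mapping property of $\Fr_\sigma$ therefore extends the assignment $V\to{\sf B}(\Fr);\;v\mapsto v$ to an interior algebra homomorphism $\psi\colon\Fr_\sigma\to{\sf B}(\Fr)$.

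To conclude, for every $v\in V$ I compute $\psi(\bar\phi(v))=\psi(\mBox v)=\mBox\psi(v)=\mBox v=v$, the last equality because $v\in\Fr={\sf OB}(\Fr)$ is open in ${\sf B}(\Fr)$. Since the Heyting operations on the open part of an interior algebra are expressible from the Boolean operations together with $\mBox$, the set $V$ generates $\Fr$ as a Heyting algebra and hence also generates ${\sf B}(\Fr)$ as an interior algebra; two interior algebra endomorphisms of ${\sf B}(\Fr)$ agreeing on $V$ must therefore coincide. Hence $\psi\circ\bar\phi=\id_{{\sf B}(\Fr)}$, and $\bar\phi$ is injective, as required. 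The main obstacle will be the class-operator bookkeeping needed to place ${\sf O}(\Fr_\sigma)$ inside ${\sf V}(\U)$ and ${\sf B}(\Fr)$ inside ${\sf V}\sigma(\U)$, so that the universal mapping properties of $\Fr$ and of $\Fr_\sigma$ are genuinely applicable; once these set-theoretic inclusions are secured, the rest is a routine application of Theorem~\ref{prop:: McKinsey-Tarski thm} and the identity $\mBox v=v$ for open $v$.
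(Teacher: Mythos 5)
Your proof is correct and follows essentially the same route as the paper's: the homomorphism $\bar\phi$ induced by $v\mapsto\mBox v$ via the universal mapping property and Theorem~\ref{prop:: McKinsey-Tarski thm}(3), a reverse map $\psi$ from the universal mapping property of $\Fr_\sigma$, and injectivity from $\psi\circ\bar\phi$ being the identity on the generating set $V$. The only (immaterial) difference is that the paper first reduces to the case where $\U$ is a variety using Fact~\ref{fact:: free algebras}(2) and Lemma~\ref{lem:: sigmaV=VB sigmaQ=QB}, whereas you keep $\U$ general and carry out the equivalent class-operator bookkeeping to place ${\sf O}(\Fr_\sigma)$ in ${\sf V}(\U)$ and ${\sf B}(\Fr)$ in ${\sf V}\sigma(\U)$ directly.
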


\begin{proof}
By Fact \ref{fact:: free algebras} point (2), $\Fr$ is free for ${\sf V}(\U)$ over $V$ and $\Fr_\sigma$ is free for ${\sf V}\sigma(\U)$ over $V$. Hence, by Lemma \ref{lem:: sigmaV=VB sigmaQ=QB}, $\Fr_\sigma$ is free for $\sigma{\sf V}(\U)$ over $V$. Thus we may assume that $\U$ is a variety. Under this assumption,  $\Fr\in\U$ and $\Fr_\sigma\in\sigma(\U)$.

 By Lemma \ref{lem::easy part of B-E thm}, 
 ${\sf O}(\Fr_\sigma)\in \U$. Thus 
 the universal mapping property  yields that there exists a homomorphism $f\colon\Fr\to{\sf O}(\Fr_\sigma)$ such that $f(v)=\mBox v$ for every $v\in V$. By Proposition \ref {prop:: McKinsey-Tarski thm} Point (3), there exists a homomorphism $f'\colon{\sf B}(\Fr)\to \Fr_\sigma$ extending $f$.

 Further,  ${\sf B}(\Fr)\in \sigma(\U)$. Thus the universal mapping property yields the existence of a homomorphism $g\colon\Fr_\sigma\to{\sf B}(\Fr)$ such that $g(v)=v$ for every $v\in V$.

We claim that $g f'\colon{\sf B}(\Fr)\to{\sf B}(\Fr)$ is an identity mapping. Indeed, since all elements from $V$ are open in ${\sf B}(\Fr)$, for every $v\in V$ we have $g f'(v)=g(\mBox v)=\mBox g(v)=\mBox v=v$. Thus, by the uniqueness of a homomorphic extension in the universal mapping property, $g f'|_F\colon \Fr\to\Fr$ is the identity mapping on $F$. And similarly, by the uniqueness in Proposition \ref {prop:: McKinsey-Tarski thm} Point (3), $g f'$ is an identity mapping. Hence $f'$ is injective.
\end{proof}

\begin{lemma}\label{lem:: free Grzegorczyk algebras}
Let $\Fr$ and $\Fr_\sigma$ be as in Lemma \ref{lem:: free Heyting algebras}, where $V$ is infinite. Then $\Fr_\sigma\in {\sf QB}(\Fr)$. 
\end{lemma}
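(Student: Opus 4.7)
The plan is to embed $\Fr_\sigma$ into a direct power of ${\sf B}(\Fr)$, which puts it in ${\sf PB}(\Fr)\subseteq{\sf QB}(\Fr)$. By Fact \ref{fact:: free algebras}(2) together with Lemma \ref{lem:: sigmaV=VB sigmaQ=QB} (applied as in the proof of Lemma \ref{lem:: free Heyting algebras}), $\Fr$ is also free for ${\sf V}(\U)$ and $\Fr_\sigma$ for ${\sf V}\sigma(\U)=\sigma{\sf V}(\U)$ over $V$, so we may assume $\U$ is a variety; in particular $\Fr\in\U$ and ${\sf B}(\Fr)\in\sigma(\U)$. For each map $\alpha\colon V\to{\sf B}(\Fr)$, the universal mapping property of $\Fr_\sigma$ yields a unique modal homomorphism $h_\alpha\colon\Fr_\sigma\to{\sf B}(\Fr)$ extending $\alpha$, and these assemble into a homomorphism $\Phi\colon\Fr_\sigma\to{\sf B}(\Fr)^{{\sf B}(\Fr)^V}$, $\Phi(a)=(h_\alpha(a))_\alpha$. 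Since the codomain is in ${\sf P}({\sf B}(\Fr))\subseteq{\sf QB}(\Fr)$, it suffices to show $\Phi$ is injective.

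For injectivity, I would take distinct $a,a'\in\Fr_\sigma$ and write them as modal terms $a=t(v_{i_1},\ldots,v_{i_n})$ and $a'=t'(v_{i_1},\ldots,v_{i_n})$ in finitely many generators. Freeness of $\Fr_\sigma$ gives $\sigma(\U)\not\models t\=t'$; unfolding $\sigma(\U)={\sf SP_UB}(\U)$ and invoking the theorem of \L{}o\'s, one obtains some $\H\in\U$ and a tuple $\bar\gamma\in{\sf B}(\H)^n$ with $t^{{\sf B}(\H)}(\bar\gamma)\ne t'^{{\sf B}(\H)}(\bar\gamma)$. Since $H$ Boolean-generates ${\sf B}(\H)$ (by Theorem \ref{prop:: McKinsey-Tarski thm}(2) applied with $\H\leq{\sf O}({\sf B}(\H))$), each $\gamma_j$ is a Boolean combination of finitely many elements $h_1,\ldots,h_K\in H$.

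To realize this witness inside ${\sf B}(\Fr)$, I use that $V$ is infinite to pick distinct generators $v'_1,\ldots,v'_K\in V$; the universal mapping property of $\Fr$ for $\U$ extends $v'_k\mapsto h_k$ to a Heyting homomorphism $\Fr\to\H$, which by Theorem \ref{prop:: McKinsey-Tarski thm}(3) lifts to a modal homomorphism $\bar\alpha\colon{\sf B}(\Fr)\to{\sf B}(\H)$. Defining $\gamma'_j\in{\sf B}(\Fr)$ as the same Boolean combination of the $v'_k$'s guarantees $\bar\alpha(\gamma'_j)=\gamma_j$. Taking $\alpha\colon V\to{\sf B}(\Fr)$ with $\alpha(v_{i_j})=\gamma'_j$, the composition $\bar\alpha\circ h_\alpha\colon\Fr_\sigma\to{\sf B}(\H)$ sends $a$ to $t^{{\sf B}(\H)}(\bar\gamma)$ and $a'$ to $t'^{{\sf B}(\H)}(\bar\gamma)$; these are distinct, so $h_\alpha(a)\ne h_\alpha(a')$ and therefore $\Phi(a)\ne\Phi(a')$.

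The main obstacle I anticipate is the bridge from a witness in ${\sf B}(\H)$ for some $\H\in\U$ to a witness in ${\sf B}(\Fr)$; it is handled by rewriting the witness as a Boolean combination of open elements (i.e.\ members of $H$) via Theorem \ref{prop:: McKinsey-Tarski thm}(2) and then realizing those elements using the universal mapping property of $\Fr$, with the infinitude of $V$ providing enough fresh generators to absorb arbitrarily many parameters.
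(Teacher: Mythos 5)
Your proof is correct, but it takes a more hands-on route than the paper. The paper's argument is two lines: reduce to the case where $\U$ is a variety (as in Lemma~\ref{lem:: free Heyting algebras}), use Fact~\ref{fact:: free algebras}(3) to get $\U={\sf V}(\Fr)$ and Lemma~\ref{lem:: sigmaV=VB sigmaQ=QB} to get $\sigma(\U)=\sigma{\sf V}(\Fr)={\sf VB}(\Fr)$, whence $\Fr_\sigma$ is free for $\{{\sf B}(\Fr)\}$ over $V$, and Fact~\ref{fact:: free algebras}(1) immediately yields $\Fr_\sigma\in{\sf QB}(\Fr)$. What you do instead is unwind Fact~\ref{fact:: free algebras}(1) from scratch: your product map $\Phi$ into ${\sf B}(\Fr)^{{\sf B}(\Fr)^V}$ is exactly the standard ${\sf ISP}$-embedding of a free algebra into a power of the generating algebra, and your injectivity argument --- failing identity in $\Fr_\sigma$ $\Rightarrow$ failing identity in some ${\sf B}(\H)$ $\Rightarrow$ witness realized in ${\sf B}(\Fr)$ via freeness of $\Fr$ and Theorem~\ref{prop:: McKinsey-Tarski thm}(3) --- is in effect a direct verification that $\sigma(\U)\subseteq{\sf V}({\sf B}(\Fr))$, i.e.\ the content of Lemma~\ref{lem:: sigmaV=VB sigmaQ=QB} combined with Fact~\ref{fact:: free algebras}(3). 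Your version is self-contained and makes the separating homomorphisms explicit (and correctly uses the infinitude of $V$ both to pass from elements to identities and to supply fresh generators for the parameters $h_1,\ldots,h_K$), at the cost of re-deriving standard facts; the appeal to \L{}o\'s is superfluous, since identities already pass from ${\sf VB}(\U)$ down to ${\sf B}(\U)$ without touching ultraproducts. The paper's version buys brevity by delegating all of this to the two cited general statements.
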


\begin{proof}
As in the proof of Lemma \ref{lem:: free Heyting algebras}, we may assume that $\U$ and $\sigma(\U)$ are varieties. Then, by Fact \ref{fact:: free algebras} point (3) and Lemma \ref{lem:: sigmaV=VB sigmaQ=QB}, 
\[
\sigma(\U)=\sigma{\sf V}(\Fr)={\sf VB}(\Fr).
\]
Thus Fact \ref{fact:: free algebras} point (1) yields that$\Fr_\sigma\in {\sf QB}(\Fr)$.
\end{proof}

\begin{theorem}
Let $\U$ be a universal class of Heyting algebras. Then
\begin{enumerate}
\item $\U$ is strongly structurally complete if and only if $\sigma(\U)$ is strongly structurally complete;
\item $\U$ is strongly universally complete if and only if $\sigma(\U)$ is strongly universally complete.
\end{enumerate} 
\end{theorem}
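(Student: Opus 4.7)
The plan is to exploit the characterizations (3) of strong structural completeness and strong universal completeness given by Propositions \ref{prop:: SSC} and \ref{prop:: SUC}, which describe these properties through behaviour of free algebras toward subquasivarieties of ${\sf Q}(\U)$ and toward universal subclasses of $\U$ respectively. Fix $\Fr$ free for $\U$ over an infinite set $V$ and $\Fr_\sigma$ free for $\sigma(\U)$ over the same $V$. The bridge between the two sides is supplied by Lemmas \ref{lem:: free Heyting algebras} and \ref{lem:: free Grzegorczyk algebras}: the former gives an embedding ${\sf B}(\Fr)\hookrightarrow\Fr_\sigma$, while the latter yields $\Fr_\sigma\in{\sf QB}(\Fr)$. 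Combined with the Blok-Esakia extended theorem, Lemma \ref{lem:: basic properties of B and O}, Lemma \ref{lem:: sigmaV=VB sigmaQ=QB} (for the identity $\sigma{\sf Q}={\sf Q}\sigma$ on universal classes), and Lemma \ref{lem:: preservation of quasivarieties} (transfer of quasivarietyhood and universal-classhood), these produce the needed dictionary.

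For the forward direction of (1), suppose $\U$ is strongly structurally complete and let $\mathcal R$ be a subquasivariety of ${\sf Q}\sigma(\U)$ with $\Fr_\sigma\in{\sf Q}(\sigma(\U)\cap\mathcal R)$. From ${\sf B}(\Fr)\hookrightarrow\Fr_\sigma$ I infer ${\sf B}(\Fr)\in{\sf Q}(\sigma(\U)\cap\mathcal R)$; then applying $\rho$ and using that $\rho$ commutes with $\sf S,\sf P,\sf P_U$ and that ${\sf OB}(\Fr)=\Fr$, I get $\Fr\in{\sf Q}(\U\cap\rho(\mathcal R))$. Here $\rho(\mathcal R)$ is a subquasivariety of ${\sf Q}(\U)$, so SSC of $\U$ forces $\U\subseteq\rho(\mathcal R)$; reapplying $\sigma$ via the Blok-Esakia isomorphism recovers $\sigma(\U)\subseteq\mathcal R$. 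For the converse, assume $\sigma(\U)$ is SSC and let $\mathcal Q$ be a subquasivariety of ${\sf Q}(\U)$ with $\Fr\in{\sf Q}(\U\cap\mathcal Q)$. Using $\Fr_\sigma\in{\sf QB}(\Fr)$ and the containment ${\sf BQ}\subseteq{\sf QB}$ extracted from Lemma \ref{lem:: basic properties of B and O}, I obtain $\Fr_\sigma\in{\sf QB}(\U\cap\mathcal Q)={\sf Q}\sigma(\U\cap\mathcal Q)\subseteq{\sf Q}(\sigma(\U)\cap\sigma(\mathcal Q))$; then SSC of $\sigma(\U)$ forces $\sigma(\U)\subseteq\sigma(\mathcal Q)$, and $\rho$ gives $\U\subseteq\mathcal Q$.

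Part (2) is formally parallel: each occurrence of ``subquasivariety of ${\sf Q}(\cdot)$'' is replaced by ``universal subclass of $(\cdot)$'', and the reformulation (3) of Proposition \ref{prop:: SUC} is used in place of that of Proposition \ref{prop:: SSC}. The preservation of universal subclasshood under $\sigma$ and $\rho$ needed here is again Lemma \ref{lem:: preservation of quasivarieties} together with monotonicity and the Blok-Esakia extended theorem. I expect the only technical point requiring attention to be the chain $\Fr_\sigma\in{\sf QB}(\Fr)\subseteq{\sf Q}(\sigma(\U)\cap\sigma(\mathcal Q))$ — a piece of class-operator bookkeeping — while the rest of the proof is essentially transporting hypotheses back and forth along the Blok-Esakia isomorphism using the two free-algebra lemmas.
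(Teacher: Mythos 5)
Your proposal is correct and follows essentially the same route as the paper: it reduces both parts to condition (3) of Propositions \ref{prop:: SSC} and \ref{prop:: SUC} and transports membership of the free algebras back and forth via Lemmas \ref{lem:: free Heyting algebras} and \ref{lem:: free Grzegorczyk algebras} together with the commutation lemmas and the extended Blok--Esakia theorem. The only cosmetic deviation is that in the forward direction you push the subquasivariety down with $\rho$ directly instead of first writing it as $\sigma(\Q)$ via the restricted Blok--Esakia theorem, which amounts to the same computation.
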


\begin{proof}
We prove (1). The proof of (2) is similar. Let $\Fr$ and $\Fr_\sigma$ be as in Lemma \ref{lem:: free Heyting algebras}, where $V$ is infinite. 

Assume that $\U$ is strongly structurally structurally complete. Let us verify the condition (3) from Proposition \ref{prop:: SSC} for $\sigma(\U)$. Let $\Q'$ be a subquasivariety of ${\sf Q}\sigma(\U)$ and assume that $$\Fr_\sigma\in{\sf Q}(\sigma(\U)\cap\Q').$$ By the Blok-Esakia theorem restricted version, there is a subquasivariety $\Q$ of ${\sf Q}(\U)$ such that $\Q'=\sigma(\Q)$. By the Blok-Esakia theorem extended version and Lemma \ref{lem:: sigmaV=VB sigmaQ=QB},
\[
\Fr_\sigma\in{\sf Q}(\sigma(\U)\cap\sigma(\Q))= {\sf Q}\sigma(\U\cap\Q)
=\sigma{\sf Q}(\U\cap \Q).
\]
Thus Lemma \ref{lem:: free Heyting algebras} yields that
\[
{\sf B}(\Fr)\in\sigma{\sf Q}(\U\cap \Q).
\]
Now Lemma \ref{lem::easy part of B-E thm} and Theorem \ref{prop:: McKinsey-Tarski thm} point (1) gives that
\[
\Fr\in{\sf Q}(\U\cap \Q).
\]
Now we may apply the assumption and obtain the inclusion $\U\subseteq \Q$. Finally, by the  Blok-Esakia theorem extended version, $\sigma(\U)\subseteq \Q'$. 

For the opposite implication we proceed similarly.  
Assume that $\sigma(\U)$ is strongly structurally complete and $\Fr\in{\sf Q}(\U\cap\Q)$ for some subquasivariety $\Q$ of ${\sf Q}(\U)$. By the Blok-Esakia theorem extended version and Lemma \ref{lem:: sigmaV=VB sigmaQ=QB},
\[
{\sf B}(\Fr)\in{\sf Q}(\sigma(\U)\cap\sigma(\Q)),
\]
and by Lemma \ref{lem:: free Grzegorczyk algebras}
\[
\Fr_\sigma\in{\sf Q}(\sigma(\U)\cap\sigma(\Q)).
\]
Thus the assumption yields that $\sigma(\U)\subseteq\sigma(\Q)$, and the Blok-Esakia theorem extended version yields that $\U\subseteq\Q$.

\end{proof}

\section*{Appendix: Proof of the Blok Lemma}

We say that a Boolean algebra $\A$ is a \emph{Boolean subalgebra} of an interior algebra $\M$ if $\A$ is a subalgebra of the Boolean reduct of $\M$. Let $\M$ and $\N$ be interior algebras, $\A$ be a Boolean subalgebra of $\M$ and  $\B$ be a Boolean subalgebra of $\N$. We say that a mapping $f\colon A\to B$ is a \emph{$\mBox$-homomorphism} from $\A$ into $\B$ if it is a Boolean homomorphisms and $f(\mBox a)=\mBox f(a)$ whenever $a\in A$, and $\mBox a\in A$ (note that then $f(\mBox a)\in B$).

\begin{lemma}\label{lem:: key lemma for Blok's Lemma}
Let $\M$ be a Grzegorczyk algebra, $\B,\C$ be its finite  Boolean subalgebras and $g\in B$. Assume that
\begin{itemize}
\item  $\B$ is generated (as a Boolean algebra) by $C\cup\{g\}$,
\item  $\B$ and $\C$ share the same open elements from $\M$.
\end{itemize}
Then the identity mapping on $C$ may be extended to a $\mBox$-homomorphism from  $\B$ into $D$, where $\bf D$ is a finite Boolean subalgebra of $\M$ generated by $C$ and possibly some open elements.
\end{lemma}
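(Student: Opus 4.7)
The strategy is to produce an element $g^{\ast}\in M$ lying in the Boolean subalgebra of $\M$ generated by $C$ together with finitely many open elements of $\M$, to serve as $f(g)$. Then $\mathbf D$ is taken to be the finite Boolean subalgebra of $\M$ generated by $C$ and those open elements, and $f$ is defined as the unique Boolean homomorphism $\B\to\mathbf D$ that fixes $C$ pointwise and sends $g$ to $g^{\ast}$.

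Since $\B$ is generated by $C\cup\{g\}$, each atom $c$ of $\C$ either remains an atom of $\B$ (when $c\leq g$ or $c\leq\neg g$) or is split by $g$ into the two nonzero parts $c\meet g$ and $c\meet\neg g$. Let $L$ be the set of split atoms and $K$ the unsplit ones. On unsplit atoms the value of $f$ is forced by $f|_{C}=\text{identity}$, so defining $f$ reduces to choosing, for each $c\in L$, an element $g_{c}$ with $0\leq g_{c}\leq c$ in the Boolean algebra generated by $C$ and open elements of $\M$, and setting
\[
g^{\ast}=\bigjoin\{c\in K\mid c\leq g\}\,\join\,\bigjoin_{c\in L}g_{c}.
\]
Any such choice yields a Boolean homomorphism $f\colon\B\to\mathbf D$, so the content of the lemma lies entirely in the $\mBox$-condition.

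That condition requires $f(\mBox b)=\mBox f(b)$ whenever $b\in B$ and $\mBox b\in B$. In that situation $\mBox b$ is an open element of $\M$ lying in $B$, so by the shared-open hypothesis it lies in $C$; hence $f(\mBox b)=\mBox b$ and the condition becomes the equality of interiors $\mBox b=\mBox f(b)$ in $\M$, for every Boolean combination $b=c_{1}\meet g\join c_{2}\meet\neg g$ (with $c_{1},c_{2}\in C$) whose interior $\mBox b$ happens to lie in $C$. The plan is to adjoin to $\mathbf D$ the finitely many open elements $\mBox b$ with $b\in B$ and to select each $g_{c}$ as a suitable Boolean combination of $c$ with these interiors. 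The (Grz) axiom is what guarantees that such combinations can be chosen so that the relevant interior computations agree, while the shared-open hypothesis ensures that every open element $\mBox b$ that actually appears on the left-hand side of the condition is already present in $\mathbf D$ (via $C$).

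The main obstacle is the simultaneous coherence of these choices: the $g_{c}$ must be picked in a single step so that every interior equality $\mBox b=\mBox f(b)$ is satisfied at once, and one must track carefully how (Grz) produces the open elements needed to witness each equality. This is exactly the technical heart of Blok's argument; as the author remarks, the detailed bookkeeping and repeated invocation of the Grzegorczyk property is unavoidable and no essential simplification is known.
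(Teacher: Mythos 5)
Your write-up is a plan, not a proof: you correctly reduce the problem to choosing a value $g^{\ast}=f(g)$ in the Boolean subalgebra generated by $C$ and finitely many open elements, and you correctly observe that the shared-opens hypothesis forces $f(\mBox b)=\mBox b\in C$ so that the whole content is the equality $\mBox b=\mBox f(b)$; but at exactly the point where the lemma has content you stop and declare the construction to be ``the technical heart'' that must be ``tracked carefully.'' No candidate for $g^{\ast}$ is ever written down, (Grz) is never actually invoked, and no interior equality is verified. That is the entire lemma, so this is a genuine gap rather than an omitted routine step.

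Concretely, what is missing is the following. Writing $p_{*}=\bigjoin\{c\in C\mid c\leq g\}$ and $p^{*}=\bigmeet\{c\in C\mid g\leq c\}$, one defines for each $c\in C$ the two correction terms
\[
p_{c}=\mBox(g\join c)\meet\neg c
\qquad\text{and}\qquad
p'_{c}=\neg\bigl(\mBox(u\to\mBox u)\to\mBox u\bigr),\quad u=\neg g\join c,
\]
checks that $p_{c},p'_{c}\leq g$, and proves the two key identities $\mBox(p_{c}\join c)=\mBox(g\join c)$ (using only the interior-algebra axioms) and $\mBox(\neg p'_{c}\join c)=\mBox(\neg g\join c)$ (this is the one and only place where (Grz) enters). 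Then $p=p_{*}\join\bigjoin_{c\in C}p_{c}\join\bigjoin_{c\in C}p'_{c}$ satisfies $p_{*}\leq p\leq p^{*}$, so it is a legitimate value for $f(g)$, and for $b=(g\join c_{1})\meet(\neg g\join c_{2})$ with $\mBox b\in B$ the sandwich $p_{c_{1}},p'_{c_{2}}\leq p\leq g$ together with monotonicity of $\mBox$ gives $\mBox(p\join c_{1})=\mBox(g\join c_{1})$ and $\mBox(\neg p\join c_{2})=\mBox(\neg g\join c_{2})$, hence $\mBox f(b)=\mBox b=f(\mBox b)$. Note two points where your sketch would mislead you: the ``simultaneous coherence'' you flag as the obstacle is in fact automatic once all correction terms are joined into a single $p$ below $g$, because monotonicity squeezes every relevant interior between two equal values; and the open elements that must be adjoined to form $\mathbf D$ are not the interiors $\mBox b$ for $b\in B$ (those lie in $C$ already by hypothesis) but rather $\mBox(g\join c)$, $\mBox u$ and $\mBox(u\to\mBox u)$, the last of which is the interior of an element that typically does not belong to $B$ at all.
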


\begin{proof}
Since $B$ is finite, a Boolean homomorphic extension of the identity mapping on $C$ into $M$ always exists. Indeed, by \cite[Theorem 130]{Gra11}, each such extension is given by a possible value $p$ for $f(g)$ from the nonempty interval in $\M$
\[
\Big[\bigjoin \{c\mid c\in C, c\leq g\},\bigmeet \{c\mid c\in C, g\leq c\}\Big]=[p_*,p^*]
\]
The point is to find an extension which also preserves the $\mBox$ operation.

For every $c\in C$ define
\[
p_c=\mBox(g\join c)\meet\neg c.
\]
Then
\[
p_c\leq (g\join c)\meet\neg c=g\meet \neg c\leq g,
\]
and
\begin{align*}
\mBox(g\join c)\geq \mBox(p_c\join c) &=\mBox((\mBox(g\join c)\meet\neg c)\join c)\\ &= \mBox(\mBox(g\join c)\join c) \geq \mBox\mBox(g\join c)=\mBox(g\join c).
\end{align*}
Thus
\begin{equation}\tag{P}
\mBox(p_c\join c)=\mBox(g\join c).
\end{equation}
Next, for $c\in C$ let $u=\neg g\join c$ and define
\begin{align*}
p'_c= \neg\big(\mBox(u\to \mBox u)\to \mBox u\big).
\end{align*}
Since $\M$ is an interior algebra, we have
\[
\mBox(u\to\mBox u)\to \mBox u \geq (u\to\mBox u)\to \mBox u
= u.
\]
Hence
\[
 p'_c \leq \neg(\neg g\join c)\leq g.
\]
This, in particular, gives that
\[
\mBox(\neg p'_c\join c)\geq \mBox(\neg g\join c).
\]
On the other, side we have
\begin{align*}
\mBox(\neg p'_c\join c)&=\mBox\Big(\neg\mBox(u\to \mBox u)\join \mBox u \join c\Big)\\
&\leq \mBox\Big(\neg\mBox(u\to \mBox u)\join  u\Big)\leq \mBox u=\mBox(\neg g\join c).
\end{align*}
Indeed,  the first inequality follows from the inequalities $\mBox u, c\leq u$ and the monotonicity of $\mBox$, and the second inequality follows from (Grz).
Thus
\begin{equation}\tag{P'}
\mBox(\neg p'_c\join c)=\mBox(\neg g\join c).
\end{equation}

Finally, define
\[
p=p_*\join \bigjoin_{c\in C}p_c \join \bigjoin_{c\in C}p'_c.
\]
Since $p_*,p_c,p'_c\leq g\leq p^*$ for every $c\in C$,  we have $p_*\leq p\leq p^*$. Thus the identity mapping on $C$ extends to a Boolean homomorphism $f$ from $\B$ into the Boolean reduct of $\M$, where $f(g)=p$. Let us show that $f$ also preserves the $\mBox$ operation. Let $b\in B$ be such that $\mBox b \in B$. The fact that $\B$ and $\C$ share the same open elements yields that $\mBox b\in C$, and hence $f(\mBox b)=\mBox b$. Since $\B$ is generated as a Boolean algebra by $C\cup\{g\}$, there are $c_1, c_2\in C$ such that $b=(g\join c_1)\meet (\neg g\join c_2)$ and $f(b)=(p\join c_1)\meet (\neg p\join c_2)$. Thus, since in $\M$ the $\mBox$ operations commutes with $\meet$,
\[
\mBox b=\mBox (g\join c_1)\meet \mBox (\neg g\join c_2).
\]
Further by (P) and (P'), since $\mBox$ is monotone and $p_{c_1},p'_{c_2}\leq p\leq g$,
\[
\mBox b=\mBox (p\join c_1)\meet \mBox (\neg p\join c_2).
\]
Hence
\[
f(\mBox b)=\mBox b=\mBox f(b).
\]
Finally, note that there is a finite set of open elements $X$ such that $p$ belongs to the Boolean subalgebra of $\M$ generated by $C\cup X$. For $\bf D$ we may take this algebra.

\end{proof}

\begin{lemma}\label{lem:: aux lemma for Blok's lemma}
Let $\M$ be a Grzegorczyk algebra. Let $\A$ be a finite  Boolean subalgebra of $\M$. Then there exists a $\mBox$-homomorphism  $f\colon\A\to\sf BO(\M)$ such that $f(a)=a$ for every open element $a$ from $A$.
\end{lemma}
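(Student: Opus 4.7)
My plan is to prove the lemma by strong induction on $n(\A)$, the minimum number of generators needed to generate $\A$ (as a Boolean algebra) over the Boolean subalgebra of $\M$ generated by $O_\A := A \cap {\sf O}(\M)$; note that in any minimum such generating set the extra generators must be non-open in $\M$ (otherwise they would already lie in that subalgebra). Since $\A$ is finite the invariant $n(\A)$ is finite, and Lemma~\ref{lem:: key lemma for Blok's Lemma} is precisely the tool that lets me decrease it by one at a time while keeping me inside $\M$ with good control over the target's open elements.

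For the base case $n(\A)=0$, the algebra $\A$ equals the Boolean subalgebra of $\M$ generated by $O_\A$. I would first argue this subalgebra is already closed under $\mBox$: by Theorem~\ref{prop:: McKinsey-Tarski thm}(2) the interior subalgebra of $\M$ generated by $O_\A$ is ${\sf B}(O_\A)$, whose open elements by part~(1) are exactly $O_\A$, so $\mBox$ applied to any Boolean combination of elements of $O_\A$ lies in $O_\A$ and is already present. Hence $\A={\sf B}(O_\A)$. The inclusion $O_\A\hookrightarrow{\sf O}(\M)={\sf OBO}(\M)$ extends by Theorem~\ref{prop:: McKinsey-Tarski thm}(3) to an interior algebra homomorphism ${\sf B}(O_\A)\to{\sf BO}(\M)$, which by Theorem~\ref{prop:: McKinsey-Tarski thm}(2) applied to ${\sf BO}(\M)$ is an isomorphism onto its image and fixes $O_\A$ pointwise; this is the required $f$.

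For the inductive step, I would fix a minimum generating set $O_\A\cup\{g_1,\ldots,g_k\}$ for $\A$, and let $\C$ be the Boolean subalgebra of $\M$ generated by $O_\A\cup\{g_1,\ldots,g_{k-1}\}$, so that $\A=\C[g_k]$. Because $\C\subseteq\A$ and $O_\A\subseteq C$, the algebras $\C$ and $\A$ share the same open elements from $\M$ (namely $O_\A$). Lemma~\ref{lem:: key lemma for Blok's Lemma} then produces a $\mBox$-homomorphism $\phi\colon\A\to\D$ that is the identity on $C$, where $\D$ is a Boolean subalgebra of $\M$ generated by $C$ together with finitely many open elements $X$. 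The crucial observation is that $\D$ is generated by $(O_\A\cup X)\cup\{g_1,\ldots,g_{k-1}\}$ with $O_\A\cup X$ consisting entirely of open elements of $\M$, so $n(\D)\leq k-1$; the inductive hypothesis then yields a $\mBox$-homomorphism $f'\colon\D\to{\sf BO}(\M)$ fixing every open element of $\D$.

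Setting $f=f'\circ\phi$, I would verify that $f$ is a Boolean homomorphism, fixes $O_\A$ (because $\phi|_C$ is the identity and $O_\A$ consists of open elements of $\D$, so $f'$ fixes it), and preserves $\mBox$ whenever both sides are defined: if $b\in A$ and $\mBox b\in A$, then $\phi(\mBox b)=\mBox\phi(b)$ already lies in $D$, so the $\mBox$-homomorphism property of $f'$ applies and yields $f(\mBox b)=\mBox f(b)$. The real technical content sits in Lemma~\ref{lem:: key lemma for Blok's Lemma}; the only subtle point in the present argument is the bookkeeping ensuring each reduction step strictly decreases the induction parameter, which rests precisely on the key lemma's guarantee that the additional generators of $\D$ can be chosen open.
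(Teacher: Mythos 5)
Your proof is correct and is essentially the paper's argument: the paper builds the same chain $\A=\A_0\to\A_1\to\cdots\to\A_n\subseteq{\sf BO}(\M)$ by applying Lemma~\ref{lem:: key lemma for Blok's Lemma} to strip off one non-open generator at a time and then composes the resulting $\mBox$-homomorphisms, which is exactly your induction unrolled into an explicit recursion. The only quibble is your base case: you do not need ${\sf B}(O_\A)$ (and $O_\A$ need not be a Heyting subalgebra of ${\sf O}(\M)$, so Theorem~\ref{prop:: McKinsey-Tarski thm}(2) does not directly apply to it); once $\A$ is Boolean-generated by open elements it already lies inside ${\sf BO}(\M)$, the subalgebra of $\M$ generated by ${\sf O}(\M)$, and the inclusion map is the required $\mBox$-homomorphism.
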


\begin{proof}
Assume that $\A$ is generated (as a Boolean algebra) by a set $\{g_1,\ldots, g_n\}$ of non-open (in $\M$) elements and some open elements. Let us define a sequence of Boolean subalgebras $\A_i$ of $\M$ and a sequence of $\mBox$-homomorphisms $f_i$ from $\A_{i-1}$ into $\A_i$ such that
\begin{itemize}
\item $\A_i$ is a finite Boolean subalgebra of $\M$ generated by $\{g_1,\ldots, g_{n-i}\}$ and some open elements,
\item $f_i(a)=a$ for every open element $a$ from $A_{i-1}$.
\end{itemize}
In particular, $\A_n$ is a Boolean subalgebra of $\sf BO(\M)$.
Once these sequences are defined, we may define $f$ as the composition
\[
f=f_n \cdots  f_1.
\]
Let $\A_0=\A$. We proceed recursively, so let us assume that $\A_{i-1}$ is already defined.
By Lemma \ref{lem:: key lemma for Blok's Lemma}, there is a $\mBox$-homomorphisms $f_i\colon \A_{i-1}\to\A_i$, where $\A_i$ is a Boolean subalgebra of $\M$ generated by $\{g_1,\ldots, g_{n-i}\}$ and some open elements ($\A_{i-1}$ is $\B$, the Boolean subalgebra of $\A_{i-1}$ generated by $\{g_1,\ldots, g_{n-i}\}\cup O_{i-1}$, where $O_{i-1}$ is the set of open elements from $A_{i-1}$, is $\C$, and  $\A_i$ is $\bf D$ in Lemma \ref{lem:: key lemma for Blok's Lemma}).

At the end, note that $f_i(a)=a$ for every $i$ and every open element $a$ from $\A_{i-1}$. This yields that $f(a)=a$ for every open element $a$ from $A$.
\end{proof}

Now we are in position to provide the last step of the proof of the Blok Lemma.

Let $\M$ be a Grzegorczyk algebra and $\N$ be an extension of ${\sf BO}(\M)$ such that $\sf O(\M)=O(\N)$ (in particular, $\N$ might be $\M$). Let $B$ be the carrier of $\sf O(\M)$. Let $\N_B$ be the expansion of $\N$ obtained by considering every element $a\in B$  a new constant $c_a$.
Then every homomorphism $f\colon \M_B\to\N_B$ is injective.
It follows that we only need to verify the existence of any homomorphism from $\M_B$ into some elementary extension of ${\sf BO}(\M)_B$. We do this with the aid of basic model theory \cite{CK90,Hod93}.

We show that the set ${\Thel}({\sf BO}(\M)_B)\cup{\diagplus}(\M)$ is satisfiable.
Here ${\Thel}({\sf BO}(\M)_B)$ is the elementary theory of  ${\sf BO}(\M)_B$,i.e., the set of first order sentences which are valid in ${\sf BO}(\M)_B$. Note that every model of ${\Thel}({\sf BO}(\M)_B)$ is an elementary extension of ${\sf BO}(\M)_B$. Further,  ${\diagplus}(\M)$ is the positive diagram of $\M$. With every element $a\in M$ we associate a symbol of a constant $c_a$. Then ${\diagplus}(\M)$ consists of all equations of the form $c_{a}\meet c_{b}\=c_{d}$, where $a,b,d$ are such that $a\meet b=d$ in $\M$, $\neg c_{a}\= c_{}$, where $a,b$ are such that $\neg a=b$ in $\M$, and  $\mBox c_{a}\=c_{b}$, where $a,b$ are such that $\mBox a=b$ in $\M$. Here some caution is needed: In ${\diagplus}(\M)$ all symbols of constants from  ${\Thel}({\sf BO}(\M)_B)$ appear and they  correspond to open elements in $B$. There remaining symbols of constants appearing in ${\diagplus}(\M)$ are those corresponding to elements in $M-B$.

By compactness theorem, it is enough to show that every set of the form ${\Thel}({\sf BO}(\M)_B)\cup\Sigma$, where $\Sigma$ is a finite subset of ${\diagplus}(\M)$, is satisfiable. Let $\A$ be a Boolean subalgebra of $\M$ generated by the set of elements corresponding to symbols of constants appearing in $\Sigma$.
By Lemma \ref{lem:: aux lemma for Blok's lemma}, there exists a $\mBox$-homomorphism $f\colon\A\to {\sf BO}(\M)$ which fixes all elements from $A\cap B$.  Then  ${\Thel}({\sf BO}(\M)_B)\cup\Sigma$ holds in the expansion of ${\sf BO}(\M)_B$ in which every symbol of a constant occurring in $\Sigma$ and  corresponding to an  element $a$ in $A-B$ is interpreted as $f(a)$. 

\end{document}